\newtheorem{theorem}{Theorem}[section]
\newtheorem{lemma}[theorem]{Lemma}
\newtheorem{proposition}[theorem]{Proposition}
\newtheorem{remark}{Remark}
\numberwithin{equation}{section}
\newcommand{\trace}{\mathop{\mathrm{tr}}}
\newcommand{\hyperg}[4]{\: _2\! F_1\! \left[\!\! \begin{array}{c} #1,\, #2 \\ #3 \end{array}\!\! ;\,  #4 \right]}
\newcommand{\sphere}{S}
\newcommand{\ball}{B}
\newcommand{\bbD}{\mathbb{D}}
\newcommand{\bbR}{\mathbb{R}}
\newcommand{\mob}{\mathrm{M\ddot{o}b}}
\begin{document}
\title[Weighted Composition Operators]{Weighted Composition Operators Acting on Harmonic Hardy Spaces}

\thanks{The second author was supported by the National Natural Science
Foundation of China grants 11571333, 11471301; The third author was supported by the
National Natural Science Foundation of China grants 11471111; the fourth author was supported by
Natural Science Foundation of Zhejiang province grant (No. LQ13A010005), the Scientific
Research and Teachers project of Huzhou University (No. RP21028) and partially by the
National Natural Science Foundation of China grant 11571105.}

\date{}

\author[P. Hu]{Pengyan Hu}
\address{Colledge of Mathematics and Statistics,
         Shenzhen University, Shenzhen, Guangdong 518060,
         People's Republic of China.}

\email{pyhu@szu.edu.cn}

\author[C. Liu]{Congwen Liu}

\address{School of Mathematical Sciences,
University of Science and Technology of China,
Hefei, Anhui 230026,
People's Republic of China.}

\address{Wu Wen-Tsun Key Laboratory of Mathematics,
USTC, Chinese Academy of Sciences, Hefei,
People's Republic of China.}

\email{cwliu@ustc.edu.cn}

\author[T. Liu]{Taishun Liu}
\address{Department of Mathematics, Huzhou Teachers College, Huzhou, Zhejiang 313000,
People¡¯s Republic of China.}

\email{tsliu@zjhu.edu.cn}

\author[L. Zhou]{Lifang Zhou}
\address{Department of Mathematics, Huzhou Teachers College, Huzhou, Zhejiang 313000,
People¡¯s Republic of China.}

\email{lfzhou@zjhu.edu.cn}

\begin{abstract}
Suppose $n\geq 3$ and let $\ball$ be the open unit ball in $\bbR^n$. Let $\varphi: \ball\to \ball$ be a $C^2$ map
whose Jacobian does not change sign, and let $\psi$ be a $C^2$ function on $\ball$.
We characterize bounded weighted composition operators $W_{\varphi,\psi}$ acting on harmonic Hardy spaces
$h^p(\ball)$. In addition, we compute the operator norm of $W_{\varphi,\psi}$ on $h^p(\ball)$ when $\varphi$ is
a M\"obius transformation of $\ball$.
\end{abstract}

\keywords{weighted composition operators; harmonic Hardy spaces; M\"obius transformations}
\subjclass[2010]{Primary 47B33; Secondary 31B05}
\commby{}
\maketitle
\section{Introduction}

We begin by recalling the notions of composition operators and weighted composition operators
in the classical setting. Let $\bbD$ denote the open unit disk in the complex plane.
Let  $\varphi$ be an analytic self-map of $\bbD$ and let $\psi$ be an analytic function on $\bbD$.
The weighted composition operator $W_{\varphi,\psi}$ is defined on the space of analytic functions on
$\bbD$ by
\begin{equation}\label{eqn:wco}
W_{\varphi,\psi} f := \psi\cdot (f \circ \varphi).
\end{equation}
There are two particularly interesting special cases of such operators: on
one hand, taking $\psi=1$ gives the composition operator $C_{\varphi}$, and on the other,
putting $\varphi=\mathrm{id}$, the identity map of $\bbD$, gives the multiplication operator $M_{\psi}$.

Weighted composition operators arise naturally in many situations.
For example, a classical result due to Forelli \cite{For64} states that all surjective isometries
of the Hardy space $H^p (\mathbb{D})$, $1\leq p<\infty$, $p\neq 2$, are of this form.
They also arise in the description of commutants of analytic Toeplitz operators (see for example \cite{Cow78, Cow80}) and in the
adjoints of (unweighted) composition operators (see for example \cite{Cow88, CG06}).
Of course, they are also of interest in their own right. Recently, there have been an increasing interest
in studying weighted composition operators acting on various spaces of holomorphic functions.
See \cite{CK10, CZ04, CZ07, Gun07, Gun08, Gun11}, to mention only a few.
We refer to \cite{CM95}, \cite{Sha93} or \cite[Chapter 11]{Zhu07} for more about the classical background.

It is clear that, with analytic symbols $\varphi$ and $\psi$, the weighted composition operator $W_{\varphi,\psi}$ is also
well defined on spaces of harmonic functions on $\bbD$. But there is little new to be said.
Just as in the usual Hardy space setting, it follows from Littlewood's subordination principle that
each composition operator $C_{\varphi}$ induces a bounded linear operator on the harmonic Hardy space $h^p(\bbD)$.
If further $\psi$ is bounded on $\bbD$, then $W_{\varphi,\psi}$ is bounded on $h^p(\bbD)$.
For $1<p<\infty$, the compactness problems for composition operators $C_{\varphi}$
on both $h^p(\bbD)$ and $H^p(\bbD)$ are trivially the same, by the Riesz projection theorem.
However, when $p=1$, the Riesz projection theorem no longer operates.
Sarason \cite{Sar90} showed that the compactness of $C_{\varphi}$ on $h^1(\bbD)$ implies
its compactness on $H^2(\bbD)$ (and so on $H^p(\bbD)$ for all $0<p<\infty$).  The reverse implication
was proved by Shapiro and Sundberg \cite{SS90}.  A vector-valued analogue of these
results was recently established by Laitila and Tylli \cite{LT06}. See also \cite{IIO11, IIO14}
for a variant of \eqref{eqn:wco} acting on $h^{\infty}(\bbD)$.

We are concerned in this paper with weighted composition operators of the form \eqref{eqn:wco},
acting on harmonic Hardy spaces $h^p(\ball)$, where $\ball$ is the unit ball in $\bbR^n$
with $n\geq 3$.  The situation turns out to be substantially different in this setting.
Note that $f\circ \varphi$ may not be harmonic even both $f$ and $\varphi$ are harmonic.
Also, it is well known that the product of two harmonic functions is not necessarily a harmonic function.
So, at the beginning, we assume that $\varphi$ and $\psi$ are smooth but not necessarily harmonic or
holomorphic in $\ball$ (the holomorphicity of a function is meaningless in this setting, at least in odd dimensions).
The purpose of this paper is to find necessary and sufficient conditions on $\varphi$ and $\psi$ in order
for $W_{\varphi,\psi}$ to be well defined and bounded on $h^p(\ball)$.

Although in a different spirit, this is partially motivated by the work of Koo and Wang \cite{KW09}, in which the authors studied
the composition operator $C_{\varphi}$ with a smooth symbol $\varphi$, acting from
the weighted harmonic Bergman spaces $b_{\alpha}^p(\ball)$
to $L_{\alpha}^p(\ball)$. They did not include the limit case $\alpha=-1$,
the harmonic Hardy spaces $h^p(\ball)$.

Before stating our main results, we introduce some definitions and notation.

For a fixed positive integer $n\geq 2$, let $\ball = B_{n}$ be the open unit ball
and $\sphere = \partial \ball$ be the unit sphere in $\mathbb{R}^n$. Let $d\sigma$ be the surface area measure
on $\sphere$, normalized so that $\sigma(\sphere)=1$. For $1\leq p<\infty$, the harmonic Hardy space
$h^p(\ball)$ consists of all complex-valued harmonic function $f$ on $\ball$ such that
\[
\|f\|_{h^p}:= \sup_{0\leq r<1} \bigg\{\int\limits_{\sphere} |f(r\zeta)|^p d\sigma(\zeta)\bigg\}^{\frac {1}{p}} < \infty.
\]
Also, let $h^\infty(\ball)$ be the Banach space of complex-valued bounded harmonic function on $\ball$, with norm
\[
\|f\|_{h^\infty}:=\sup_{x\in\ball}|f(x)|.
\]

A M\"obius transformation of $\widehat{\mathbb{R}}^n$ (the one-point compactification of $\mathbb{R}^n$)
is a finite composition of reflections in spheres or hyperplanes. See the next section for the
detailed definitions.

For a differentiable map $\varphi: \Omega\to f(\Omega)\subset \bbR^n$, where
$\Omega$ is a domain in $\bbR^n$, let $D\varphi(x)$ and $J_{\varphi}(x)$ denote the Jacobian matrix
and the Jacobian determinant of $\varphi$ at $x\in \ball$ respectively.

Let $\varphi: \ball\to \ball$ be a non-constant
$C^2$-smooth map with Jacobian not changing sign, and let $\psi$ be a $C^2$ function on $\ball$.
The weighted composition operator $W_{\varphi,\psi}$ is defined by
$W_{\varphi,\psi} f := \psi\cdot (f \circ \varphi)$.
Throughout this paper, for a weighted composition operator $W_{\varphi,\psi}$, we shall always assume that the
Jacobian $J_{\varphi}$ of its inducing map $\varphi$
does not change sign in $\ball$.
This assumption is natural, since for a holomorphic composition operator, the real Jacobian of
its inducing map is always nonnegative.

Our first main result is the following.

\begin{theorem}\label{thm:main1}
Suppose $n\geq 3$ and $1\leq p \leq \infty$. 
Then $W_{\varphi,\psi}$ is bounded on $h^p(\ball)$ if and only if
\begin{enumerate}
\item[(i)]
$\varphi$ is the restriction to $\ball$ of a M\"{o}bius transformation of $\widehat{\mathbb{R}}^n$, and
\item[(ii)]
$\psi$ is a constant multiple of $|D \varphi|^{\frac {n-2}{2}}$.
\end{enumerate}
Here and throughout we write $|D\varphi(x)|:= J_{\varphi}(x)^{\frac {1}{n}}$.
More precisely, $W_{\varphi,\psi}$ is bounded on $h^p(\ball)$ if and only if $\varphi$ has the form
\[
\varphi(x) = b + \frac {\alpha A (x-a)} {|x-a|^{\epsilon}}
\]
and $\psi$ is a constant multiple of $|x-a|^{\frac {\epsilon(2-n)}{2}}$,
where $a\in \mathbb{R}^n\setminus \ball$, $b\in \bbR^n$, $\alpha\in \mathbb{R}$, $A$ is an
orthogonal matrix, and $\epsilon$ is either $0$ or $2$.
\end{theorem}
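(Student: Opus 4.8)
The plan is to isolate the algebraic skeleton of the problem---the requirement that $W_{\varphi,\psi}$ map harmonic functions to harmonic functions---and only afterwards address the genuinely analytic question of boundedness. Since $h^p(\ball)$ consists of harmonic functions, boundedness of $W_{\varphi,\psi}$ on $h^p(\ball)$ forces $\psi\cdot(f\circ\varphi)$ to be harmonic for every $f\in h^p(\ball)$; in particular this holds for every harmonic polynomial of degree $\leq 2$, all of which lie in $h^p(\ball)$. First I would compute, for harmonic $f$,
\[
\Delta\big(\psi\,(f\circ\varphi)\big)=(\Delta\psi)(f\circ\varphi)+2\sum_k\langle\nabla\psi,\nabla\varphi_k\rangle\,(\partial_kf)\circ\varphi+\psi\sum_{k,l}\langle\nabla\varphi_k,\nabla\varphi_l\rangle\,(\partial_k\partial_lf)\circ\varphi+\psi\sum_k(\Delta\varphi_k)\,(\partial_kf)\circ\varphi,
\]
using the chain rule and $\Delta f=0$. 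Evaluating at a point and letting $f$ range over harmonic polynomials of degree $\leq 2$---whose value, gradient, and (traceless) Hessian at $\varphi(x)$ can be prescribed independently---separates this identity into three pointwise conditions: $\Delta\psi=0$; the first-order system $2\langle\nabla\psi,\nabla\varphi_k\rangle+\psi\,\Delta\varphi_k=0$; and, from the traceless quadratics, the conformality relation $\langle\nabla\varphi_j,\nabla\varphi_k\rangle=\mu\,\delta_{jk}$, i.e. $D\varphi\,(D\varphi)^{\!\top}=\mu\,I$ pointwise, with $\mu=\tfrac1n\sum_k|\nabla\varphi_k|^2$.

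The conformality relation says precisely that $\varphi$ is a conformal map of $\ball$ into $\bbR^n$. Here I would invoke the classical rigidity theorem of Liouville: for $n\geq 3$, every conformal map of a domain in $\bbR^n$ is the restriction of a M\"obius transformation of $\widehat{\mathbb{R}}^n$. This is exactly the place where the hypothesis $n\geq 3$ is indispensable---in dimension two conformal maps form an infinite-dimensional family and the theorem fails. This gives conclusion (i). Next I would recall the classification of M\"obius transformations: either $\varphi$ fixes $\infty$, in which case it is a Euclidean similarity $x\mapsto b+\alpha Ax$ (the case $\epsilon=0$), or it has a unique finite pole $a$, in which case composing an inversion at $a$ with a similarity writes $\varphi(x)=b+\alpha A(x-a)/|x-a|^2$ (the case $\epsilon=2$). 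The standing hypothesis $\varphi(\ball)\subseteq\ball$ forces the pole to satisfy $a\in\bbR^n\setminus\ball$, so that $\varphi$ is finite and $C^2$ on $\ball$.

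To identify $\psi$ I would use the conformal covariance of the flat Laplacian: for a M\"obius $\varphi$,
\[
\Delta\Big(|D\varphi|^{\frac{n-2}{2}}\,(f\circ\varphi)\Big)=|D\varphi|^{\frac{n+2}{2}}\,\big((\Delta f)\circ\varphi\big),
\]
so $\psi_0:=|D\varphi|^{(n-2)/2}$ preserves harmonicity, and $f\mapsto\psi_0\,(f\circ\varphi)$ is a bijection of the space of harmonic functions (its inverse being the analogous operator for $\varphi^{-1}$). Granting this, if $\psi$ also preserves harmonicity then the multiplier $h:=\psi/\psi_0$ must send harmonic functions to harmonic functions; testing $h\cdot 1$ and $h\cdot x_j$ gives $\Delta h=0$ and $\partial_j h=0$ for all $j$, whence $h$ is constant and $\psi=c\,|D\varphi|^{(n-2)/2}$, which is conclusion (ii). Computing $|D\varphi|=|\alpha|$ when $\epsilon=0$ and $|D\varphi|=|\alpha|\,|x-a|^{-2}$ when $\epsilon=2$ (the inversion having conformal factor $|x-a|^{-2}$) then yields the explicit weight $\psi=c\,|x-a|^{\epsilon(2-n)/2}$.

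It remains to prove the converse, namely that for $\varphi$ of the stated form and $\psi=c\,|D\varphi|^{(n-2)/2}$ the operator $W_{\varphi,\psi}$ is genuinely bounded on $h^p(\ball)$; this is the analytic heart of the argument and the step I expect to be the main obstacle. Because the pole lies outside $\ball$, $\varphi$ extends to a conformal diffeomorphism of a neighborhood of $\overline{\ball}$ and restricts to a conformal map of $\sphere$ whose surface Jacobian equals $|D\varphi|^{\,n-1}$ on $\sphere$. The idea is to pass to boundary values: via the Poisson-integral representation and nontangential limits one writes the boundary function of $W_{\varphi,\psi}f$ as $\psi\,(f^{*}\circ\varphi)$, and the change of variables $\eta=\varphi(\zeta)$ on $\sphere$ turns $\int_\sphere|\psi|^p\,|f^{*}\circ\varphi|^p\,d\sigma$ into $\int_\sphere|f^{*}|^p\,|D\varphi|^{\,p(n-2)/2-(n-1)}\,d\sigma$; since $|D\varphi|$ is bounded above and below on the compact set $\sphere$, this is $\lesssim\|f\|_{h^p}^p$. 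When $\varphi$ is not onto one instead dominates $f\circ\varphi$ on $\sphere$ by the values of $f$ on the interior sphere $\varphi(\sphere)\subseteq\ball$ and invokes the defining supremum of the $h^p$ norm, in the spirit of Littlewood's subordination principle. The delicate points are justifying the boundary identity for $W_{\varphi,\psi}f$ (that $u=\psi\,(f\circ\varphi)$ is the Poisson integral of its boundary values, rather than merely a harmonic function with controlled radial means) and handling the endpoints $p=1$ and $p=\infty$, where boundary values must be read as measures and as an $L^\infty$ bound respectively. The harmonicity computation, the appeal to Liouville, and the M\"obius bookkeeping are by contrast routine.
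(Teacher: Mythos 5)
Your necessity argument follows the paper's skeleton (the chain-rule identity, testing against harmonic polynomials of degree $\leq 2$ to extract $\Delta\psi=0$, the first-order system and the conformality relation, then Liouville), but the step identifying $\psi$ has a genuine gap. You factor $W_{\varphi,\psi}=M_h\circ W_{\varphi,\psi_0}$ with $h=\psi/\psi_0$, $\psi_0=|D\varphi|^{(n-2)/2}$, and claim $W_{\varphi,\psi_0}$ is a bijection of ``the space of harmonic functions'' so that $h\cdot 1$ and $h\cdot x_j$ are harmonic. In fact $W_{\varphi,\psi_0}$ is a bijection from harmonic functions \emph{on $\varphi(\ball)$} onto harmonic functions on $\ball$, while your hypothesis only concerns $f$ harmonic on all of $\ball$. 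Concretely, the unique preimage of the constant $1$ is $f=|D\varphi^{-1}|^{(n-2)/2}=c\,|y-b|^{2-n}$ with $b=\varphi(\infty)$; when $\varphi(\ball)$ is a proper subball of $\ball$ (say $\varphi(x)=(x+e_1)/4$), the point $b$ lies outside $\varphi(\ball)$ but can lie inside $\ball$, so this $f$ is singular in $\ball$, does not belong to $h^p(\ball)$, and the hypothesis cannot be applied to it. The fix is already in your hands and is exactly the paper's route: the first-order system you derived, combined with conformality, gives pointwise
\[
\nabla \psi ~=~ -\frac{\psi}{2|D\varphi|^2}\,(D\varphi)^t\,\Delta\varphi ~=~ \frac{n-2}{2}\,\psi\,\nabla\big(\log|D\varphi|\big),
\]
whence $\psi=C|D\varphi|^{(n-2)/2}$ by connectedness, with no surjectivity needed. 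Relatedly, your appeal to ``the classical rigidity theorem of Liouville'' glosses over the fact that your conformality relation $D\varphi(D\varphi)^t=\mu I$ permits $\mu$ to vanish and $\varphi$ is only $C^2$; one needs the generalized Liouville theorem of Iwaniec--Martin, which is why the standing hypothesis that $J_\varphi$ does not change sign is built into the theorem.

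The sufficiency half has the more serious omission, which you partly anticipated. Your boundary change of variables works only when $\varphi(\sphere)=\sphere$, i.e.\ $\varphi\in\mob(\ball)$, and your fallback for non-surjective $\varphi$ (``dominate $f\circ\varphi$ on $\sphere$ by values of $f$ on the interior sphere and invoke the defining supremum'') fails precisely in the hard case where $\varphi(\overline{\ball})$ is internally tangent to $\sphere$, e.g.\ $\varphi(x)=(x+e_1)/2$: the spheres $\varphi(r\sphere)$ are not centered at the origin, their values are not an $h^p$-mean of $f$, and no compact-subset bound is available since $\varphi(\overline{\ball})$ touches $\sphere$. The paper's key device, absent from your sketch, is to exploit the harmonicity preservation \emph{already proved}: for fixed $\eta\in\sphere$ the function $x\mapsto P(\varphi(x),\eta)\,|D\varphi(x)|^{(n-2)/2}$ is harmonic on $\ball$, so the mean-value property yields, uniformly in $r\in[0,1)$ and $\eta\in\sphere$,
\[
\int\limits_{\sphere} P(\varphi(r\zeta),\eta)\,|D\varphi(r\zeta)|^{\frac{n-2}{2}}\,d\sigma(\zeta)
~=~ P(\varphi(0),\eta)\,|D\varphi(0)|^{\frac{n-2}{2}}
~\leq~ \frac{1+|\varphi(0)|}{(1-|\varphi(0)|)^{n-1}}\,|D\varphi(0)|^{\frac{n-2}{2}}.
\]
From this single estimate the case $p=1$ follows by writing $f=P[\mu]$ and applying Fubini, the case $1<p<\infty$ by Jensen's inequality together with $|D\varphi|\in L^\infty(\ball)$ (the pole $a$ lies outside $\overline{\ball}$), and $p=\infty$ is trivial. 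This also dissolves the ``delicate point'' you flagged about whether $\psi\,(f\circ\varphi)$ is the Poisson integral of its boundary values: the paper never passes to boundary functions in the sufficiency proof, but simply bounds the radial $L^p$-means directly, which handles the endpoints $p=1,\infty$ and the tangential case in one stroke.
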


The case of most interest is when $\varphi$ is a M\"obius transformation of $\ball$ onto $\ball$,
i.e., $\varphi\in\mob(\ball)$. 
In this case,  we obtain a formula for the precise norm of $W_{\varphi,\psi}$, acting
on $h^p(\ball)$. In our next two theorems, we do not assume $n\geq 3$.

\begin{theorem}\label{thm:main3}
Suppose $n\geq 2$ and $1\leq p\leq \infty$. Let $\varphi\in\mob(\ball)$ and $\psi = |D \varphi|^{\frac {n-2}{2}}$. Then
\[
\|W_{\varphi,\psi}\|_{h^p\to h^p}  ~=~ \left(\dfrac {1+|\varphi(0)|}{1-|\varphi(0)|} \right)^{\left|\frac {n-1}{p}-\frac {n-2}{2}\right|}.
\]
\end{theorem}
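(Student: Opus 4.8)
The plan is to push everything down to the boundary sphere $\sphere$ and reduce the problem to a single weight, the conformal factor $|D\varphi|$ restricted to $\sphere$. First I would record the standard facts about $\varphi\in\mob(\ball)$. Writing $\varphi=U\circ\varphi_a$ with $U$ orthogonal and $\varphi_a$ the canonical involution interchanging $0$ and $a=\varphi^{-1}(0)$ (so $|a|=|\varphi(0)|$), one has $|D\varphi(x)|=|D\varphi_a(x)|=\tfrac{1-|a|^2}{[x,a]^2}$ with $[x,a]^2:=1-2x\cdot a+|x|^2|a|^2$. On $\sphere$, $[x,a]^2=|x-a|^2$, and as $x$ runs over $\sphere$ the quantity $|x-a|$ runs through $[1-|a|,\,1+|a|]$; hence $\max_{\sphere}|D\varphi|=\tfrac{1+|a|}{1-|a|}$ and $\min_{\sphere}|D\varphi|=\tfrac{1-|a|}{1+|a|}=(\max_{\sphere}|D\varphi|)^{-1}$, while $|D\varphi(0)|=1-|a|^2$. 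By Theorem~\ref{thm:main1} the operator $T_\varphi:=W_{\varphi,\psi}$ with $\psi=|D\varphi|^{(n-2)/2}$ is a bounded isomorphism of $h^p(\ball)$ (its inverse being $T_{\varphi^{-1}}$), so only the constant remains to be pinned down.

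For $1<p<\infty$ I would use the isometry $h^p(\ball)\cong L^p(\sphere)$ given by nontangential boundary values $f\mapsto f^*$. Since $|D\varphi|^{(n-2)/2}$ is continuous on $\overline{\ball}$ and $\varphi$ extends to a diffeomorphism of $\overline{\ball}$ carrying nontangential approach regions to nontangential approach regions, the boundary function of $T_\varphi f$ is $(T_\varphi f)^*=|D\varphi|^{(n-2)/2}(f^*\circ\varphi)$. The geometric input is the surface change of variables: because $\varphi$ is conformal with factor $|D\varphi|$, its restriction to the $(n-1)$-dimensional $\sphere$ has surface Jacobian $|D\varphi|^{n-1}$, i.e.\ $d\sigma(\varphi(\zeta))=|D\varphi(\zeta)|^{n-1}d\sigma(\zeta)$. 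Substituting $\eta=\varphi(\zeta)$ then gives
\[
\|T_\varphi f\|_{h^p}^p=\int_{\sphere}|D\varphi(\zeta)|^{p(n-2)/2}|f^*(\varphi(\zeta))|^p\,d\sigma(\zeta)=\int_{\sphere}|D\varphi(\varphi^{-1}(\eta))|^{\lambda}\,|f^*(\eta)|^p\,d\sigma(\eta),\qquad \lambda=\tfrac{p(n-2)}{2}-(n-1).
\]
As $f^*$ ranges over all of $L^p(\sphere)$, $T_\varphi$ acts as multiplication by the weight $w^{\lambda}$, $w(\eta):=|D\varphi(\varphi^{-1}(\eta))|$; since $\{w(\eta):\eta\in\sphere\}=\{|D\varphi(\zeta)|:\zeta\in\sphere\}$ and $\min_{\sphere}|D\varphi|=(\max_{\sphere}|D\varphi|)^{-1}$, one gets $\max_{\sphere}w^{\lambda}=(\max_{\sphere}|D\varphi|)^{|\lambda|}$ regardless of the sign of $\lambda$. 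Thus $\|T_\varphi\|_{h^p\to h^p}=(\max_{\sphere}w^{\lambda})^{1/p}=\left(\tfrac{1+|a|}{1-|a|}\right)^{|\lambda|/p}$, which is the asserted value because $|\lambda|/p=\left|\tfrac{n-1}{p}-\tfrac{n-2}{2}\right|$.

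The endpoints need separate, elementary arguments. For $p=\infty$, where $\|f\|_{h^\infty}=\sup_{\ball}|f|$, the identity $T_\varphi f=|D\varphi|^{(n-2)/2}(f\circ\varphi)$ together with $\sup_{\ball}|f\circ\varphi|=\sup_{\ball}|f|$ yields $\|T_\varphi\|_{h^\infty}=\sup_{\overline{\ball}}|D\varphi|^{(n-2)/2}=(\max_{\sphere}|D\varphi|)^{(n-2)/2}$, the supremum being attained on $\sphere$ and realized by $f\equiv1$; this matches the formula since $\tfrac{n-1}{\infty}-\tfrac{n-2}{2}=-\tfrac{n-2}{2}$. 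For $p=1$, $h^1(\ball)$ consists of the Poisson integrals $P[\mu]$ of finite Borel measures with $\|P[\mu]\|_{h^1}=\|\mu\|$, where $P(x,\zeta)=(1-|x|^2)|x-\zeta|^{-n}$. The key computation is that $T_\varphi$ sends a single Poisson kernel to a multiple of another, $T_\varphi(P(\cdot,\xi))=c(\xi)\,P(\cdot,\varphi^{-1}(\xi))$, a positive harmonic function whose only boundary singularity sits at $\varphi^{-1}(\xi)$; evaluating at $x=0$ identifies $c(\xi)=P(\varphi(0),\xi)|D\varphi(0)|^{(n-2)/2}$. Consequently $T_\varphi P[\mu]=P[\varphi^{-1}_{*}(c\,\mu)]$ and $\|T_\varphi\|_{h^1}=\sup_{\xi\in\sphere}c(\xi)$. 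Maximizing $P(\varphi(0),\xi)$ over $\sphere$ gives $\tfrac{1+|a|}{(1-|a|)^{n-1}}$, and a short simplification using $|D\varphi(0)|=1-|a|^2$ produces $\sup_{\xi}c(\xi)=\left(\tfrac{1+|a|}{1-|a|}\right)^{n/2}$, which is the claimed norm since $\left|\tfrac{n-1}{1}-\tfrac{n-2}{2}\right|=\tfrac{n}{2}$.

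I expect the main obstacles to be the two facts underlying the boundary reduction. First, the surface change of variables $d\sigma(\varphi(\zeta))=|D\varphi(\zeta)|^{n-1}d\sigma(\zeta)$ and the compatibility $(T_\varphi f)^*=|D\varphi|^{(n-2)/2}(f^*\circ\varphi)$ require conformality up to the boundary and preservation of nontangential limits; these are standard for Möbius maps but must be set up carefully. Second, and more seriously, the $p=1$ endpoint lies outside the clean $L^p$-isometry picture: one must argue with representing measures rather than boundary functions, verify that $T_\varphi$ carries Poisson kernels to scaled Poisson kernels, and check that $\sup_\mu\|T_\varphi P[\mu]\|_{h^1}/\|\mu\|$ is genuinely approached by measures concentrating at the maximizer of $c$. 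The Poisson-kernel statement I would deduce from the Möbius transformation law $P(\varphi_a(x),\varphi_a(\zeta))=P(x,\zeta)\,|D\varphi_a(x)|^{-(n-2)/2}|D\varphi_a(\zeta)|^{-n/2}$ for $x\in\ball,\ \zeta\in\sphere$, itself obtained from the distance identity $|\varphi_a(x)-\varphi_a(\zeta)|=(1-|a|^2)|x-\zeta|/([x,a][\zeta,a])$; establishing this law is the real computational heart of the endpoint case.
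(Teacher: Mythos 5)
Your proof is correct, but it reaches the sharp constant by a genuinely different route than the paper. For $1<p<\infty$ you and the paper share the first half: boundary values via Lemma \ref{lem: aast} and the spherical change of variables of Lemma \ref{lem:Jacobiformula}, yielding $\|W_{\varphi,\psi}f\|_{h^p}^p=\int_{\sphere} w^{\lambda}|f^{\ast}|^p\,d\sigma$ with $w=|D\varphi|\circ\varphi^{-1}$ and $\lambda=\tfrac{p(n-2)}{2}-(n-1)$. But where the paper then merely bounds the weight by \eqref{eqn:jac-est} to get the upper estimate, and obtains the matching lower bound by a separate mechanism --- the adjoint identity $W_{\varphi,\psi}^{\star}P_y=\psi(y)P_{\varphi(y)}$ (Lemma \ref{lem:adjoint}) combined with the exact hypergeometric norms $\|P_y\|_{h^{p^\prime}}$ (Lemma \ref{lem:thenormofposs}), letting $y\to\pm\varphi^{-1}(0)/|\varphi^{-1}(0)|$ --- you observe that since $f^{\ast}$ ranges over \emph{all} of $L^p(\sphere)$, the operator norm equals $\|w^{\lambda/p}\|_{\infty}$ exactly, and the reciprocity $\min_{\sphere}|D\varphi|=(\max_{\sphere}|D\varphi|)^{-1}$ handles both signs of $\lambda$ at once. (Strictly, $W_{\varphi,\psi}$ does not ``act as multiplication'' on boundary functions --- it is a weighted composition there --- but all you use is the norm identity $\|W_{\varphi,\psi}f\|_{h^p}=\|w^{\lambda/p}f^{\ast}\|_{L^p}$, which is correct.) Likewise at $p=1$: the paper recycles its upper estimate \eqref{eqn:upperest1} and again uses the adjoint asymptotics, whereas you compute the exact action $W_{\varphi,\psi}P(\cdot,\xi)=c(\xi)P(\cdot,\varphi^{-1}(\xi))$ with $c(\xi)=P(\varphi(0),\xi)|D\varphi(0)|^{\frac{n-2}{2}}$; your transformation law for the Poisson kernel checks out against \eqref{eqn:moebiden1}--\eqref{eqn:moebiden3}, and the norm is then \emph{attained} at a point mass. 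Your route is more elementary (no hypergeometric functions, no adjoint) and produces extremals; what the paper's route buys is reuse: testing $W_{\varphi,\psi}^{\star}$ on the weakly null normalized kernels $k_y^{(p^\prime)}$ is precisely what powers the essential-norm computation in Section 5, which your multiplier argument would not give without modification.

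Two caveats. First, passing from $\|W_{\varphi,\psi}f\|_{h^p}$ to the boundary integral presupposes $W_{\varphi,\psi}f\in h^p(\ball)$ --- a harmonic function can have a.e.\ nontangential limits in $L^p(\sphere)$ without belonging to $h^p(\ball)$ --- so the a priori boundedness you invoke is genuinely needed, not just decorative. Second, you cite Theorem \ref{thm:main1} for that boundedness, but it is stated only for $n\geq 3$, while Theorem \ref{thm:main3} allows $n=2$; for $n=2$ one has $\psi\equiv 1$, and the classical boundedness of $C_\varphi$ on $h^p(\bbD)$ (or your own endpoint computations) fills this in, so this is a citation slip rather than a gap.
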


This may be regarded as a higher dimensional analogue of the well-known result of Nordgren \cite{Nor68}
concerning the norms of composition operators with inner symbols.


%
In this case, we can also compute the essential norm of $W_{\varphi,\psi}$.
Recall that the essential norm of a bounded linear operator $T$ is the
distance from $T$ to the compact operators, that is,
\[
\|T\|_e := \inf \{ \|T-K\|: K \text{ is compact}\}.
\]
Note that $\|T\|_e = 0$ if and only if $T$ is compact.

\begin{theorem}\label{thm:essentialnorm}
Suppose $n\geq 2$ and $1 <  p < \infty$. Let $\varphi\in\mob(\ball)$ and $\psi = |D \varphi|^{\frac {n-2}{2}}$. Then
\[
\|W_{\varphi,\psi}\|_{e}=\left(\dfrac {1+|\varphi(0)|}{1-|\varphi(0)|} \right)^{\left|\frac {n-1}{p}-\frac {n-2}{2}\right|}.
\]
\end{theorem}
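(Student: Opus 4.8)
The upper estimate is immediate: the essential norm never exceeds the operator norm, so Theorem~\ref{thm:main3} gives
\[
\|W_{\varphi,\psi}\|_{e}\;\le\;\|W_{\varphi,\psi}\|_{h^p\to h^p}\;=\;\left(\frac{1+|\varphi(0)|}{1-|\varphi(0)|}\right)^{\left|\frac{n-1}{p}-\frac{n-2}{2}\right|}\;=:\;M .
\]
The entire content of the theorem is therefore the reverse inequality $\|W_{\varphi,\psi}\|_{e}\ge M$, and this is exactly where the hypothesis $1<p<\infty$ will be used.

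The plan is to produce a normalized, weakly null test sequence along which $W_{\varphi,\psi}$ nearly attains its norm. Because $1<p<\infty$, the space $h^p(\ball)$ is reflexive and, via nontangential boundary values, isometrically isomorphic to $L^p(\sphere,d\sigma)$; under this identification a sequence is weakly null in $h^p$ precisely when its boundary-value sequence is weakly null in $L^p(\sphere)$. Recalling that a compact operator $K$ carries weakly null sequences to norm null sequences, if $\{f_k\}\subset h^p$ satisfies $\|f_k\|_{h^p}=1$ and $f_k\rightharpoonup 0$, then for every compact $K$,
\[
\|W_{\varphi,\psi}-K\|\;\ge\;\limsup_{k\to\infty}\|(W_{\varphi,\psi}-K)f_k\|_{h^p}\;\ge\;\limsup_{k\to\infty}\bigl(\|W_{\varphi,\psi}f_k\|_{h^p}-\|Kf_k\|_{h^p}\bigr)\;=\;\limsup_{k\to\infty}\|W_{\varphi,\psi}f_k\|_{h^p}.
\]
Taking the infimum over compact $K$ yields $\|W_{\varphi,\psi}\|_{e}\ge\limsup_{k}\|W_{\varphi,\psi}f_k\|_{h^p}$, so it suffices to build a normalized, weakly null sequence with $\|W_{\varphi,\psi}f_k\|_{h^p}\to M$.

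I would obtain such a sequence by concentrating boundary mass at the extremal point of the norm computation. Write $a=\varphi^{-1}(0)$, so $|a|=|\varphi(0)|$, and $\varphi=U\circ\varphi_a$ with $U$ orthogonal and $\varphi_a$ the Möbius involution of $\ball$ interchanging $0$ and $a$. The change of variables $\xi=\varphi(\zeta)$ on $\sphere$ (the same one used to compute the norm in Theorem~\ref{thm:main3}) turns $W_{\varphi,\psi}$ into a weighted operator satisfying
\[
\|W_{\varphi,\psi}f\|_{h^p}^{p}\;=\;\int_{\sphere} w(\xi)\,|f^*(\xi)|^{p}\,d\sigma(\xi),\qquad w(\xi)=\bigl|D\varphi^{-1}(\xi)\bigr|^{\,(n-1)-p\frac{n-2}{2}},
\]
where $f^*$ denotes the boundary function of $f$. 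Since $|D\varphi^{-1}(\xi)|=(1-|a|^2)/|U^{-1}\xi-a|^{2}$ runs over $\bigl[\tfrac{1-|a|}{1+|a|},\tfrac{1+|a|}{1-|a|}\bigr]$ as $\xi$ traverses $\sphere$, the weight $w$ is continuous and attains $\|w\|_{\infty}=M^{p}$ at a single boundary point $\xi_*$ (namely $\varphi(0)/|\varphi(0)|$ or its antipode, depending on the sign of the exponent). Let $E_k\subset\sphere$ be the spherical cap centered at $\xi_*$ of radius $1/k$, and let $f_k:=P\!\left[\sigma(E_k)^{-1/p}\mathbf{1}_{E_k}\right]$ be the Poisson extension of the normalized indicator. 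Then $\|f_k\|_{h^p}=1$, and the displayed identity gives
\[
\|W_{\varphi,\psi}f_k\|_{h^p}^{p}\;=\;\frac{1}{\sigma(E_k)}\int_{E_k}w\,d\sigma\;\longrightarrow\;w(\xi_*)\;=\;\|w\|_{\infty}\;=\;M^{p}
\]
by continuity of $w$ at $\xi_*$. Moreover $f_k\rightharpoonup0$: for $g\in L^{p'}(\sphere)$ with $\tfrac1p+\tfrac1{p'}=1$ (here $p'<\infty$ since $p>1$), Hölder's inequality gives $|\langle f_k^*,g\rangle|\le\sigma(E_k)^{-1/p}\|\mathbf{1}_{E_k}\|_{L^p}\,\|g\|_{L^{p'}(E_k)}=\|g\|_{L^{p'}(E_k)}\to0$ by absolute continuity of the integral. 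This sequence has all the required properties and finishes the proof.

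The main obstacle—and the only place $1<p<\infty$ is truly needed—is the weak nullity of this concentrating sequence: it is precisely the reflexivity of $h^p$ (equivalently $p'<\infty$) that forces a normalized family of boundary atoms shrinking to $\xi_*$ to tend weakly to $0$, hence to be annihilated in the limit by every compact operator. At $p=1$ the same atoms converge to a unit point mass rather than weakly to $0$, so the lower bound argument collapses; this is why the theorem is restricted to $1<p<\infty$. Everything else is routine once the boundary reduction behind Theorem~\ref{thm:main3} is available: the isometric identification $h^p\cong L^p(\sphere)$, the continuity of $w$ that makes the cap averages converge to $w(\xi_*)$, and the standard complete continuity of compact operators.
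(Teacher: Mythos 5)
Your proposal is correct, and the lower-bound argument takes a genuinely different route from the paper's. The paper dualizes: it computes $W_{\varphi,\psi}^{\star}P_y=\psi(y)P_{\varphi(y)}$ (Lemma \ref{lem:adjoint}), evaluates $\|P_y\|_{h^{p'}}$ exactly through the hypergeometric identities of Lemma \ref{lem:thenormofposs}, shows the normalized kernels $k_y^{(p')}$ are weakly null in $h^{p'}$ (Lemma \ref{lem:weakly}), and then tests $(W_{\varphi,\psi}-K)^{\star}$ against $k_{y_j}^{(p')}$, recycling the limit \eqref{eq:normreproducing} already established during the proof of Theorem \ref{thm:main3}. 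You instead stay on the $h^p$ side: the boundary identity $\|W_{\varphi,\psi}f\|_{h^p}^p=\int_{\sphere}w\,|f^{\ast}|^p\,d\sigma$ with $w=|D\varphi^{-1}|^{(n-1)-p(n-2)/2}$ is exactly the change-of-variables computation from Theorem \ref{thm:main3} (Lemmas \ref{lem: aast} and \ref{lem:Jacobiformula}), and once the problem is reduced to a pointwise weight on $\sphere$, your test functions are Poisson extensions of normalized indicators of caps shrinking to the maximizer of $w$, with weak nullity obtained by an elementary H\"older/absolute-continuity estimate. What your route buys is self-containedness and elementarity: no adjoint formula, no duality identification of $h^{p'}$ as the dual, and no hypergeometric asymptotics are needed beyond the norm identity itself; what the paper's route buys is brevity in context (the key inequality \eqref{eq:normreproducing} was already proved) and a test family of reproducing kernels indexed by interior points, which is the standard template for essential-norm lower bounds. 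Both arguments use $1<p<\infty$ precisely where you locate it, namely the weak nullity of a concentrating normalized family (equivalently $p'<\infty$) together with the isometric boundary identification. One small imprecision, trivially repaired: when $\varphi(0)=0$, or when the exponent $(n-1)-p(n-2)/2$ vanishes, the weight $w$ is constant, so it does not attain its maximum at a \emph{single} point $\xi_{*}$ as you assert; but in those cases $w\equiv M^p=1$ and your cap averages equal $M^p$ identically, so the conclusion holds verbatim.
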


We do not know whether this formula is valid for the cases $p=1$ and $p=\infty$ and leave it as an open question.

The rest of the paper is organized as follows: In Section 2 we recall some basic materials about M\"obius transformations
and the harmonic Hardy spaces. Theorem \ref{thm:main1}, \ref{thm:main3} and \ref{thm:essentialnorm}
will be proved in Sections 3, 4 and 5 respectively.

\section{Preliminaries}
\subsection{Conformal mappings in space and M\"obius transformations}
We recall some basic facts on M\"obius transformations on
$n$-dimensional Euclidean space $\bbR^n$. Good references for these topics
are \cite{Ahl81}, \cite{Bea83} and \cite{Sto16}

Let $\widehat{\mathbb{R}}^n := \bbR^n \cup \{\infty\}$
be the one-point compactification of $\bbR^n$.
The reflection in the unit sphere $\sphere$ is defined by
\[
x ~\longmapsto~ x^{\ast}:= \frac {x}{|x|^2} \quad (0^{\ast} := \infty, \; \infty^{\ast}:=0).
\]
In general, the reflection in the sphere $\sphere(a,r) := \{ x\in \bbR^n: |x-a|=r\}$ is the map
$\varphi: \widehat{\mathbb{R}}^n \to \widehat{\mathbb{R}}^n$
defined by
\begin{equation}\label{eqn:refl}
\varphi(x) ~:=~ a + r^2 (x-a)^{\ast}.
\end{equation}
Let $\Pi(a,t):=\{x\in \bbR^n: x\cdot a = t\}$ be a hyperplane in $\bbR^n$,
where $a$ is a unit vector in $\bbR^n$ and $t \in \bbR$.
The reflection in $\Pi(a,t)$ is the map
$\varphi: \widehat{\mathbb{R}}^n \to \widehat{\mathbb{R}}^n$
defined by
\[
\varphi(x) = \begin{cases}
x - 2 ( x\cdot a - t) a^{\ast}, & x\in \bbR^n,\\
\infty, & x=\infty.
\end{cases}
\]

The (general) M\"obius group $\mathrm{M\ddot{o}b}(n)$ is the group of homeomorphisms of $\widehat{\mathbb{R}}^n$
generated by all reflections (in spheres or hyperplanes). Its
members are known as the M\"obius transformations of $\widehat{\mathbb{R}}^n$.

M\"obius transformations are conformal mappings, in the sense that,
for any M\"obius transformation $\varphi$ there is a finite subset $E$ of $\widehat{\mathbb{R}}^n$
such that the restriction of $\varphi$ to the set $\Omega := \widehat{\mathbb{R}}^n\setminus E$ is
a conformal diffeomorphism of $\Omega$ onto $\varphi(\Omega)$.
In particular, M\"obius transformations satisfy the
Cauchy-Riemann system
\begin{equation}\label{eqn:CRsys}
D\varphi(x) (D\varphi(x))^t = |D\varphi(x)|^2 I_n
\end{equation}
for all $x\in \Omega$. It should be noted that the notion of the
Cauchy-Riemann system defined here is slightly different from that in
\cite{IM01}, because all vectors in this paper will be column vectors.

Conversely, according to the generalized Liouville theorem proved in 1993 by T. Iwaniec and G.J. Martin,
the only nonconstant solutions to the Cauchy-Riemann system of Sobolev class $W^{1,n}_{loc}(\Omega, \bbR^n)$
are the restrictions to $\Omega$ of M\"obius transformations of $\widehat{\mathbb{R}}^n$.
Let $\Omega$ be a domain in $\bbR^n$.  By a weak solution of the Cauchy-Riemann System on $\Omega$ we mean any
function $\varphi: \Omega \to \bbR^n$ of Sobolev class $W^{1,n}_{loc}(\Omega, \bbR^n)$ such that
\begin{itemize}
\item
$\varphi$ is sense-preserving or sense-reversing;
\item
\eqref{eqn:CRsys} holds for almost every $x\in \Omega$.
\end{itemize}

\begin{lemma}[{\cite[Theorem 5.1.1]{IM01}}]\label{lem:Liouville}
Suppose $n\geq 3$.  Every weak solution $\varphi: \Omega \to \bbR^n$ of
the Cauchy-Riemann system \eqref{eqn:CRsys} is either constant or the restriction to $\Omega$ of a
M\"obius transformations of $\widehat{\mathbb{R}}^n$. More precisely, $\varphi$ has the form
\[
\varphi(x) = b + \frac {\alpha A (x-a)} {|x-a|^{\epsilon}},
\]
where $a\in \mathbb{R}^n\setminus \Omega$, $b\in \bbR^n$, $\alpha\in \mathbb{R}$, $A$ is an
orthogonal matrix, and $\epsilon$ is either $0$ or $2$.
\end{lemma}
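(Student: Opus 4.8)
The plan is to prove the statement in two stages: first show that every weak solution of \eqref{eqn:CRsys} of class $W^{1,n}_{loc}$ is in fact smooth, and then classify the smooth solutions by a direct computation with the conformal factor. By composing with a reflection if necessary, I may assume $\varphi$ is sense-preserving, so that $J_{\varphi}\geq 0$; by Reshetnyak's theory of quasiregular mappings a nonconstant such $\varphi$ satisfies $J_{\varphi}>0$ almost everywhere, whence $|D\varphi| = J_{\varphi}^{1/n}>0$ and \eqref{eqn:CRsys} says precisely that $D\varphi(x) = |D\varphi(x)|\,O(x)$ with $O(x)$ orthogonal for a.e. $x$; that is, $\varphi$ is $1$-quasiregular.

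The regularity stage is the deep part, and I expect it to be the main obstacle. The idea is to exploit the borderline distortion $|D\varphi|^n = J_{\varphi}$ (the case $K=1$). First, Caccioppoli-type estimates for quasiregular maps together with Gehring's reverse-H\"older lemma upgrade $\varphi$ from $W^{1,n}_{loc}$ to $W^{1,q}_{loc}$ for some $q>n$. Next, the algebraic rigidity of the equation $D\varphi^t D\varphi = |D\varphi|^2 I_n$ is used to show that the coordinate functions of $\varphi$ solve a second-order elliptic system with controlled coefficients (equivalently, that $\varphi$ respects the nonlinear Hodge system associated with the conformal structure), after which Morrey-type regularity theory and bootstrapping yield $\varphi\in C^\infty$, indeed real-analytic. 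This is the part one cannot shortcut, and it is where the hypothesis $n\geq 3$ and the sharp Sobolev exponent $n$ genuinely enter.

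Once $\varphi$ is known to be smooth with $\varphi^*\delta = \lambda^2\delta$, where $\lambda := |D\varphi|>0$ and $\delta$ is the Euclidean metric, I would finish with the classical conformal-flatness computation. Writing $u = \log\lambda$, the requirement that $\lambda^2\delta$ be flat (it is, being locally isometric to Euclidean space via $\varphi$) forces its Schouten tensor to vanish, i.e.
\[
\nabla^2 u - du\otimes du + \tfrac12 |\nabla u|^2\,\delta = 0.
\]
Setting $v := 1/\lambda = e^{-u}$, the off-diagonal equations give $v_{ij}=0$ for $i\neq j$ while the diagonal equations give a common value for the $v_{ii}$, so that $\nabla^2 v = c(x)\,I_n$; differentiating once more and using the symmetry of third partial derivatives forces $c$ to be constant. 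Hence $v(x) = \tfrac{c}{2}|x|^2 + b\cdot x + d$, and therefore $\lambda = 1/v$ is precisely the conformal factor of a reflection (when $c\neq 0$, giving $\epsilon = 2$) or of a similarity (when $c=0$, giving $\epsilon = 0$).

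Finally, to pass from the conformal factor to $\varphi$ itself, I would compose $\varphi$ with a model M\"obius transformation $\mu$ carrying the same factor $\lambda$; then $\mu^{-1}\circ\varphi$ is conformal with constant factor, hence a Euclidean similarity $x\mapsto \alpha A(x-a)+b$ with $A$ orthogonal (the $c=0$ case), and composing back recovers the stated normal form $\varphi(x) = b + \alpha A(x-a)/|x-a|^{\epsilon}$. The condition $a\in\bbR^n\setminus\Omega$ is automatic, since the singularity of an inversion must lie outside the domain of definition. I expect the regularity stage to absorb essentially all the difficulty; the classification, by contrast, is an elementary though careful integration of the resulting PDE.
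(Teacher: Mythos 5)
First, a point of comparison: the paper does not prove this lemma at all --- it is imported verbatim from Iwaniec--Martin \cite[Theorem 5.1.1]{IM01}, so there is no internal proof to measure you against; your proposal is an attempted reconstruction of the cited theorem. Your second stage, the classification of \emph{smooth} solutions, is the classical Liouville argument and is essentially correct, modulo three repairable slips. (i) The diagonal part of the Schouten equation gives not merely a common value $v_{ii}=c$ but the pointwise constraint $|\nabla v|^2 = 2cv$; this is what forces the constant term of the quadratic to vanish, so that $v=\tfrac{c}{2}|x-a|^2$ is \emph{exactly} the reciprocal factor of an inversion. If you integrate only $\nabla^2 v = cI_n$ you get $v=\tfrac{c}{2}|x-a|^2+e$, and for $e\neq 0$ no M\"obius transformation carries that factor, so your final composition step would have nothing to compose with. (ii) The composition must be $\varphi\circ\mu^{-1}$, not $\mu^{-1}\circ\varphi$: by the chain rule $\lambda_{\varphi\circ\mu^{-1}}(y)=\lambda\bigl(\mu^{-1}(y)\bigr)\,\lambda_{\mu^{-1}}(y)\equiv 1$, whereas $\lambda_{\mu^{-1}\circ\varphi}(x)=\lambda_{\mu^{-1}}\bigl(\varphi(x)\bigr)\,\lambda(x)$ is not constant in general. (iii) You misplace where $n\geq 3$ enters: the Schouten tensor and the decomposition of the curvature tensor you invoke divide by $n-2$, and in $n=2$ every holomorphic map is a smooth solution of \eqref{eqn:CRsys}, so it is precisely your \emph{classification} stage, not the regularity stage, that is false in dimension two.

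The genuine gap is the regularity stage, as you yourself anticipate. Caccioppoli plus Gehring's reverse-H\"older lemma does yield $W^{1,q}_{loc}$ for some $q>n$, but the proposed continuation --- an elliptic system plus ``Morrey-type regularity and bootstrapping'' to $C^\infty$ --- fails as stated. The natural second-order equations here are of $n$-Laplacian type: since $\operatorname{adj}D\varphi = |D\varphi|^{n-2}(D\varphi)^t$ for a sense-preserving solution, the Piola identity gives $\operatorname{div}\bigl(|\nabla\varphi^{(i)}|^{n-2}\nabla\varphi^{(i)}\bigr)=0$ for each coordinate, and this equation is degenerate elliptic where the gradient vanishes; its regularity theory stalls at $C^{1,\alpha}_{loc}$, with no general bootstrap to smoothness. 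The actual proofs (Gehring, Reshetnyak, and \cite{IM01}, Chapter 5) must exploit the special algebraic rigidity of the full system \eqref{eqn:CRsys}, typically by first establishing $W^{2,2}_{loc}$ estimates via difference quotients and then using identities particular to the conformal case --- notably that $|D\varphi|^{(n-2)/2}$ and $|D\varphi|^{(n-2)/2}\varphi^{(i)}$ are harmonic, which is exactly the content of Proposition \ref{prop:harmcty} of this paper in the smooth category. So your outline correctly isolates where the difficulty lives, but it does not close it: as written, the regularity step is a pointer to the literature rather than a proof, which leaves your argument with the same logical status the lemma already has in the paper, namely a citation of \cite{IM01}.
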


We denoted by $\mob(\ball)$ the subgroup of $\mathrm{M\ddot{o}b}(n)$ which keeps $\ball$ invariant;
in other words, $\mob(\ball)$ consists of all M\"obius transformations of $\ball$ onto $\ball$.
To each point $a\in \ball$ we may associate a unique $\varphi_a \in \mob(\ball)$, enjoying
the following properties:
\begin{enumerate}
\item[(i)]
$\varphi_a(0)=a, \; \varphi_a(a)=0$;
\item[(ii)]
$\varphi_a^{-1} = \varphi_a$;
\item[(iii)]
$\varphi_a$ has a unique fixed point.
\end{enumerate}
Such a M\"obius transformation can be expressed by an explicit formula
\begin{equation}\label{eqn:moebius}
\varphi_a(x)~:=~\frac{|x-a|^2a-(1-|a|^2)(x-a)}{[x,a]^2},
\end{equation}
with
\[
[x,a] := (1-2x\cdot a+|x|^2|a|^2)^{1/2}.
\]
The M\"obius transformation $\varphi_b\circ \varphi_a$ carries the point $a\in \ball$ to the point $b\in \ball$. Thus the
group $\mob(\ball)$ is transitive on $\ball$.
Furthermore,  any $\varphi \in \mob(\ball)$, $\varphi\neq I$,
has a unique representation of the form:
\begin{equation}
\varphi ~=~ A \circ \varphi_a,
\end{equation}
where $A$ is an orthogonal transformation and $a=\varphi^{-1}(0)$.

We present three convenient formulas, in which $\varphi \in \mob(\ball)$,
\begin{align}
|D\varphi(x)| ~=~& \frac{1-|\varphi^{-1}(0)|^2}{[x, \varphi^{-1}(0)]^2}, \label{eqn:moebiden1} \\
1-|\varphi(x)|^2 ~=~& |D\varphi(x)| (1-|x|^2), \label{eqn:moebiden2}\\
|\varphi(x)-\varphi(y)| ~=~& |D\varphi(x)|^{\frac {1}{2}} \, |D\varphi(y)|^{\frac {1}{2}} |x-y|,
\quad (x, y\in \overline{B}). \label{eqn:moebiden3}
\end{align}
Also, \eqref{eqn:moebiden1}, together with the easy observation that $|\varphi(0)| ~=~ |\varphi^{-1}(0)|$,
implies
\begin{equation}\label{eqn:jac-est}
\frac {1-|\varphi(0)|}{1+|\varphi(0)|}~\leq~ |D\varphi(x)|~\leq~ \frac{1+|\varphi(0)|}{1-|\varphi(0)|}
\end{equation}
and
\begin{equation}\label{eqn:jac-est2}
\frac {|D\varphi(y)|} {|D\varphi(x)|}\leq \left(\frac{1+|\varphi(0)|}{1-|\varphi(0)|}\right)^{2}.
\end{equation}

%
%
%

\subsection{Harmonic Hardy spaces}

For the material in this section we refer the reader to \cite[Chapter 6]{ABR01}.

As it has been defined in the introduction, for $1\leq p<\infty$, the harmonic Hardy space $h^p(\ball)$
consists of all complex-valued harmonic function $f$ on $\ball$ such that
\[
\|f\|_{h^p}:= \sup_{0\leq r<1} \bigg\{\int\limits_{\sphere} |f(r\zeta)|^p d\sigma(\zeta)\bigg\}^{\frac {1}{p}} < \infty.
\]
Also, $h^\infty$ is the Banach space of complex-valued bounded harmonic function on $\ball$.

For a complex Borel measure $\mu$ on $\sphere$, its Poisson integral is the function
\[
P[\mu](x):=\int\limits_{\sphere} P(x,\zeta) d\mu(\zeta), \quad x\in \ball,
\]
where
\[
P(x,\zeta):=\frac {1-|x|^2}{|x-\zeta|^n}, \quad (x,\zeta)\in \ball\times \sphere,
\]
is the Poisson kernel. It is well known that a harmonic function belongs to $h^1(\ball)$
if and only if it is the Poisson integral of a complex Borel measure. Moreover, for
$1<p\leq \infty$, the space $h^p(\ball)$ coincides with the space of Poisson integrals of
$L^p(\sphere)$ functions.

Given $\delta>1$ and $\zeta\in \sphere$, let $\Gamma_{\delta}(\zeta)$ be the nontangential approach region
with vertex $\zeta$ consisting of all points $x\in \ball$ such that
\[
|x-\zeta|< \frac {\delta}{2} (1-|x|^2).
\]
A function $f$ on $\ball$ is said to have nontangential limit $L$ at $\zeta\in \sphere$ if
for each $\delta>1$, we have $f(x)\to L$ as $x\to \zeta$ within $\Gamma_{\delta}(\zeta)$.

If $1<p<\infty$ and $f\in h^p(\ball)$, then, by the Fatou theorem (see \cite[p. 137]{ABR01}), $f$ has nontangential
limits $f^{\ast}$ almost everywhere on $\sphere$. (The use of the superscript $\ast$ here should
cause no confusion with the previous use of that as the reflection in the unit sphere.)
We shall call $f^{\ast}$ the boundary function of $f$.
The harmonic Hardy space $h^p(\ball)$ can be identified with a closed subspace
of $L^p(\sphere)$ via the isometry $f\to f^{\ast}$. In particular,
$\| f\|_{h^p} = \|f^{\ast}\|_{L^p(\sphere)}$ for every $f\in h^p(\ball)$.



%

\begin{lemma}\label{lem: aast}
If $1<p\leq \infty$, $f\in h^p(\ball)$ and $\varphi\in \mob(\ball)$, then
\[
(f\circ\varphi)^\ast(\zeta)=f^\ast(\varphi(\zeta))
\]
for almost every $\zeta\in \sphere$.
\end{lemma}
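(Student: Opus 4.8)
The plan is to establish that the boundary function of the composition $f \circ \varphi$ coincides almost everywhere with the composition of the boundary function $f^{\ast}$ with the boundary map $\varphi|_{\sphere}$. The key structural facts I would rely on are that $\varphi \in \mob(\ball)$ extends to a smooth diffeomorphism of $\overline{\ball}$ (hence of $\sphere$ onto itself), that such $\varphi$ maps nontangential approach regions into nontangential approach regions, and that $\varphi|_\sphere$ is absolutely continuous so that null sets pull back to null sets. Concretely, since $f \in h^p(\ball)$ with $1 < p \leq \infty$, the Fatou theorem gives that $f$ has a nontangential limit $f^\ast(\eta)$ for almost every $\eta \in \sphere$; let $N \subset \sphere$ be the exceptional null set.

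First I would fix $\zeta \in \sphere$ with $\varphi(\zeta) \notin N$ and show that $(f \circ \varphi)(x) \to f^\ast(\varphi(\zeta))$ as $x \to \zeta$ nontangentially. The mechanism is geometric: because $\varphi$ is a M\"obius self-map of $\ball$, it is a conformal diffeomorphism of $\overline{\ball}$, so whenever $x \to \zeta$ within a cone $\Gamma_\delta(\zeta)$, the image $\varphi(x) \to \varphi(\zeta)$ and moreover $\varphi(x)$ stays within some cone $\Gamma_{\delta'}(\varphi(\zeta))$. The cleanest way to see the cone-preservation is to use identity \eqref{eqn:moebiden3}, namely $|\varphi(x) - \varphi(y)| = |D\varphi(x)|^{1/2} |D\varphi(y)|^{1/2} |x-y|$, together with \eqref{eqn:moebiden2}, $1 - |\varphi(x)|^2 = |D\varphi(x)|(1-|x|^2)$. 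Taking $y = \zeta$ and writing $\eta = \varphi(\zeta) \in \sphere$, these give
\[
\frac{|\varphi(x) - \eta|}{1 - |\varphi(x)|^2} = \frac{|D\varphi(x)|^{1/2} |D\varphi(\zeta)|^{1/2} |x - \zeta|}{|D\varphi(x)| (1-|x|^2)} = \left(\frac{|D\varphi(\zeta)|}{|D\varphi(x)|}\right)^{1/2} \frac{|x-\zeta|}{1-|x|^2}.
\]
By \eqref{eqn:jac-est2} the ratio $|D\varphi(\zeta)|/|D\varphi(x)|$ is bounded by the constant $\bigl((1+|\varphi(0)|)/(1-|\varphi(0)|)\bigr)^2$, so a bound $|x-\zeta| < \tfrac{\delta}{2}(1-|x|^2)$ for $x \in \Gamma_\delta(\zeta)$ translates into a bound $|\varphi(x)-\eta| < \tfrac{\delta'}{2}(1-|\varphi(x)|^2)$ with $\delta'$ depending only on $\delta$ and $\varphi(0)$. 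Thus $\varphi$ carries $\Gamma_\delta(\zeta)$ into $\Gamma_{\delta'}(\eta)$, and since $\eta \notin N$ the function $f$ has nontangential limit $f^\ast(\eta)$ there; hence $(f\circ\varphi)(x) \to f^\ast(\eta) = f^\ast(\varphi(\zeta))$ nontangentially. This shows $(f\circ\varphi)^\ast(\zeta) = f^\ast(\varphi(\zeta))$ holds for every $\zeta$ in $\sphere \setminus \varphi^{-1}(N)$.

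It remains to verify that the exceptional set $\varphi^{-1}(N)$ is $\sigma$-null, and this is the step I expect to require the most care. Because $\varphi|_\sphere$ is a $C^1$ diffeomorphism of the compact manifold $\sphere$ onto itself, its Jacobian (the surface Jacobian of $\varphi$ restricted to $\sphere$) is continuous and bounded below by a positive constant; equivalently, the pullback measure $(\varphi|_\sphere)_* \sigma$ is mutually absolutely continuous with $\sigma$ with bounded Radon--Nikodym derivative. Consequently $\sigma(N) = 0$ forces $\sigma(\varphi^{-1}(N)) = 0$. For $1 < p < \infty$ one must also confirm that $f \circ \varphi$ genuinely lies in $h^p(\ball)$ so that Fatou applies to it as well, but this is guaranteed by Theorem \ref{thm:main3} (or by the change-of-variables estimate underlying it); for $p = \infty$ the boundedness of $f \circ \varphi$ is immediate. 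Combining the pointwise nontangential convergence with the nullity of $\varphi^{-1}(N)$ yields $(f\circ\varphi)^\ast(\zeta) = f^\ast(\varphi(\zeta))$ for almost every $\zeta \in \sphere$, completing the proof.
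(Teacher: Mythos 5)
Your proof is correct and follows essentially the same route as the paper: the identical computation combining \eqref{eqn:moebiden3}, \eqref{eqn:moebiden2} and \eqref{eqn:jac-est2} to show that $\varphi$ carries $\Gamma_{\delta}(\zeta)$ into a cone $\Gamma_{\delta'}(\varphi(\zeta))$, whence $f\circ\varphi$ has nontangential limit $f^{\ast}(\varphi(\zeta))$ at every $\zeta$ with $\varphi(\zeta)$ outside the Fatou exceptional set. You are in fact more careful than the paper on one point: the paper reduces to showing $(f\circ\varphi)^{\ast}(\varphi^{-1}(\zeta))=f^{\ast}(\zeta)$ a.e.\ and silently uses that $\varphi|_{\sphere}$ pulls null sets back to null sets, whereas you justify this explicitly via the smoothness of $\varphi|_{\sphere}$ (it also follows from the change-of-variables formula of Lemma \ref{lem:Jacobiformula}).

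One caveat: the parenthetical step where you ``confirm that $f\circ\varphi$ lies in $h^p(\ball)$'' by appealing to Theorem \ref{thm:main3} should be deleted. It is circular, since the paper proves Theorem \ref{thm:main3} using this very lemma, and it is moreover false for $n\geq 3$: there $f\circ\varphi$ is generally not harmonic at all (only $|D\varphi|^{\frac{n-2}{2}}\,(f\circ\varphi)$ is), so it cannot belong to $h^p(\ball)$ as defined. Fortunately no such membership is needed: the notation $(f\circ\varphi)^{\ast}$ refers only to the nontangential boundary limit, and your cone argument establishes its existence and value directly, without any appeal to the Fatou theorem for $f\circ\varphi$. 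With that aside removed, the proof stands as written.
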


\begin{proof}
It is equivalent to show that $(f\circ \varphi)^{\ast} (\varphi^{-1}(\zeta)) = f^{\ast}(\zeta)$ for almost every $\zeta\in \sphere$ (by replacing $\zeta$ by $\varphi^{-1}(\zeta)$).
Assume that $f$ has a nontangential limit at $\zeta\in \sphere$.
It suffices to show that for any $\delta>1$,
\[
\varphi(\Gamma_{\delta}(\zeta)) ~\subset~ \Gamma_{\tilde{\delta}}(\varphi(\zeta)), \quad \text{with }
\tilde{\delta}:= \frac {1+|\varphi(0)|}{1-|\varphi(0)|} \, \delta.
\]
Assume $x\in \Gamma_{\delta}(\zeta)$. Using \eqref{eqn:moebiden2}, \eqref{eqn:moebiden3} and \eqref{eqn:jac-est2}, we get
\begin{align*}
\frac{|\varphi(x)-\varphi(\zeta)|}{1-|\varphi(x)|^2} ~=~&
\frac {|x-\zeta|} {1-|x|^2} \left(\frac {|D\varphi(\zeta)|} {|D\varphi(x)|} \right)^{\frac {1}{2}}
~\leq \frac {\delta}{2} \left\{\frac {1+|\varphi(0)|}{1-|\varphi(0)|}\right\},
\end{align*}
which implies that  $\varphi(x) \in \Gamma_{\tilde{\delta}} (\varphi(\zeta))$. The proof is complete.
\end{proof}

The following change-of-variable formula is well-known.  We give a proof of it for completeness.

\begin{lemma}\label{lem:Jacobiformula}
If $f\in L^{1}(\sphere)$ and $\varphi\in \mob(\ball)$, then
\begin{equation}\label{eq:Jacobian}
\int\limits_{\sphere} f(\varphi(\zeta)) d\sigma(\zeta) = \int\limits_{\sphere} f(\zeta)|D\varphi^{-1}(\zeta)|^{n-1}d\sigma(\zeta).
\end{equation}
\end{lemma}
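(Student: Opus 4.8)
The plan is to recognize \eqref{eq:Jacobian} as the change-of-variables formula for the conformal diffeomorphism $\varphi|_{\sphere}$ of the sphere onto itself, the only real content being the identification of the surface Jacobian of $\varphi$ with $|D\varphi|^{n-1}$. First I would record that, since $\varphi\in\mob(\ball)$ maps $\overline{\ball}$ onto $\overline{\ball}$, its pole $\varphi^{-1}(\infty)$ lies in $\bbR^n\setminus\overline{\ball}$; hence $\varphi$ is $C^\infty$ on a neighborhood of $\overline{\ball}$, no singularity meets $\sphere$, and $\varphi$ restricts to a diffeomorphism of $\sphere$ onto $\sphere$. The same remarks apply verbatim to $\varphi^{-1}\in\mob(\ball)$.

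Next I would compute how $d\sigma$ is distorted. Fix $\zeta\in\sphere$. By the Cauchy--Riemann system \eqref{eqn:CRsys} we may write $D\varphi(\zeta)=|D\varphi(\zeta)|\,O(\zeta)$ with $O(\zeta)$ orthogonal. Because $\varphi(\sphere)=\sphere$, the differential $D\varphi(\zeta)$ carries the tangent space $T_\zeta\sphere$ into $T_{\varphi(\zeta)}\sphere$; conformality (preservation of angles) together with invariance of the sphere forces $O(\zeta)$ to respect the splitting of $\bbR^n$ into the tangential $(n-1)$-dimensional subspace and the normal line, so that $O(\zeta)$ restricts to an isometry $T_\zeta\sphere\to T_{\varphi(\zeta)}\sphere$. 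Consequently the tangential differential of $\varphi$ at $\zeta$ is $|D\varphi(\zeta)|$ times an isometry between $(n-1)$-dimensional spaces, and its $(n-1)$-dimensional Jacobian equals $|D\varphi(\zeta)|^{\,n-1}$. The standard area formula on the sphere then yields, for every $g\in L^1(\sphere)$,
\[
\int\limits_{\sphere} g(\eta)\,d\sigma(\eta)=\int\limits_{\sphere} g(\varphi(\zeta))\,|D\varphi(\zeta)|^{\,n-1}\,d\sigma(\zeta).
\]

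Finally I would derive \eqref{eq:Jacobian} by applying this identity with $\varphi$ replaced by $\varphi^{-1}$ and with $g:=f\circ\varphi$. Since $(f\circ\varphi)\circ\varphi^{-1}=f$, the right-hand side becomes $\int_{\sphere} f(\zeta)\,|D\varphi^{-1}(\zeta)|^{\,n-1}\,d\sigma(\zeta)$, while the left-hand side is $\int_{\sphere} f(\varphi(\eta))\,d\sigma(\eta)$; relabeling $\eta$ as $\zeta$ gives the claim. I expect the only genuine obstacle to be the middle step: rigorously justifying that the full differential $D\varphi(\zeta)$, which scales \emph{all} lengths by the single factor $|D\varphi(\zeta)|$, induces on the tangent space to $\sphere$ exactly $|D\varphi(\zeta)|$ times an isometry. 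This hinges on the tangent--normal decomposition being preserved, which is where conformality and the invariance $\varphi(\sphere)=\sphere$ are used together; once this is settled, everything else is the classical surface change of variables.
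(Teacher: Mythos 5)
Your proof is correct, but it takes a genuinely different route from the paper. You prove the lemma as a purely differential-geometric change of variables: since $\varphi$ is smooth on a neighborhood of $\overline{\ball}$ (its pole lies outside $\overline{\ball}$) and restricts to a diffeomorphism of $\sphere$, the Cauchy--Riemann system \eqref{eqn:CRsys} plus the fact that $D\varphi(\zeta)$ carries $T_\zeta\sphere$ onto $T_{\varphi(\zeta)}\sphere$ forces the tangential differential to be $|D\varphi(\zeta)|$ times an isometry, so the surface Jacobian is $|D\varphi(\zeta)|^{n-1}$ and the area formula finishes the job; your key step is sound, since once the tangent space maps onto the tangent space, an orthogonal matrix automatically carries the normal line to the normal line, and applying the resulting identity to $\varphi^{-1}$ with $g=f\circ\varphi$ gives \eqref{eq:Jacobian} exactly as you say. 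The paper instead argues entirely within harmonic function theory: taking $f$ continuous, it uses the (later-proved) fact that $\psi\cdot(P[f]\circ\varphi)$ with $\psi=|D\varphi|^{\frac{n-2}{2}}$ is harmonic, applies the mean-value property at the origin for each radius $r$, lets $r\to 1$, rewrites $(1-|\varphi(0)|^2)^{\frac{n-2}{2}}P(\varphi(0),\zeta)$ as $|D\varphi^{-1}(\zeta)|^{\frac{n}{2}}$ via \eqref{eqn:moebiden1} and $[\zeta,\varphi(0)]=|\zeta-\varphi(0)|$ on $\sphere$, and then bootstraps by substituting $f\cdot|D\varphi^{-1}|^{\frac{n-2}{2}}$ for $f$, using $|D\varphi^{-1}(\varphi(\zeta))|\,|D\varphi(\zeta)|=1$ to reach the exponent $n-1$. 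What each approach buys: yours is self-contained and avoids the paper's forward dependence on the harmonicity result of Section 3 (Proposition \ref{prop:harmcty}), at the cost of importing the tangential-Jacobian/area-formula machinery; the paper's stays inside the Poisson-integral toolkit it reuses throughout (and its intermediate identity with exponents $\frac{n-2}{2}$ and $\frac{n}{2}$ is itself of independent use), at the cost of an approximation step, a limit interchange, and the slightly slick bootstrap.
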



\begin{proof}
By an approximation argument, we may assume that $f$ is continuous on $\sphere$.
We denote by $P[f]$ the Poisson integral of $f$. Since $W_{\varphi,\psi} (P[f])$ is harmonic
in $\ball$, for every $0\leq r<1$,
\begin{align}\label{eqn:mean-value}
\int_{\sphere} P[f](\varphi(r\zeta)) |D\varphi(r\zeta)|^{\frac {n-2}{2}} d\sigma(\zeta)
&~=~\int\limits_{\sphere} W_{\varphi,\psi} (P[f])(r\zeta)) d\sigma(\zeta) \\
&~=~(1-|\varphi(0)|^2)^{\frac {n-2}{2}} P[f](\varphi(0)), \notag
\end{align}
where the last equality follows from the mean-value property of harmonic functions and the formula
\[
|D\varphi (0)| ~=~ 1-|\varphi(0)|^2.
\]
Note that $\varphi(r\zeta) \to \varphi(\zeta)$ as $r\to 1$, and
\[
\lim\limits_{r\to 1}  P[f](\varphi(r\zeta)) = P[f](\varphi(\zeta)) = f(\varphi(\zeta))
\]
for each $\zeta\in \sphere$.
So, passing to the limit in \eqref{eqn:mean-value} yields
\begin{align*}
\int\limits_{\sphere} f(\varphi(\zeta)) |D\varphi(\zeta)|^{\frac {n-2}{2}}d\sigma(\zeta)
~=~& (1-|\varphi(0)|^2)^{\frac {n-2}{2}}P[f](\varphi(0))\\
=~& \int\limits_{\sphere}f(\zeta) |D\varphi^{-1}(\zeta)|^{\frac {n}{2}}d\sigma(\zeta).
\end{align*}
Now, replacing $f$ by $f\cdot |D\varphi^{-1}|^{\frac {n-2}{2}}$ in the above equality yields \eqref{eq:Jacobian}.
\end{proof}

\subsection{Extended Poisson Kernel}
For fixed $y\in \ball$, put
\[
P_y(x) := \frac{1-|x|^2|y|^2}{(1-2x\cdot y+|x|^2|y|^2)^{n/2}}, \quad x\in \ball.
\]
Note that $(P_y)^{\ast} =P(y, \cdot)$ on $\sphere$, where $P(y, \zeta)$ is the Poisson kernel for the unit ball.
It is easy to check that $P_y\in h^p(\ball)$ for $1\leq p \leq \infty$. Furthermore, we have the following

\begin{lemma}\label{lem:thenormofposs}
If $1\leq  p\leq \infty$ then
\begin{equation}\label{eq:normpoisson}
\|P_y\|_{h^p}= \Phi_p(|y|^2)\, (1-|y|^2)^{\frac {1-n}{p^\prime}},
\end{equation}
where $p^{\prime}$ is the conjugate exponent of $p$, and
\[
\Phi_p(r): = \begin{cases} \left\{\hyperg{ \frac {n(1-p)}{2}} {\frac {2n-2-np}{2}}{\frac {n}{2}}{r}\right\}^{\frac {1}{p}}, &
1\leq p<\infty,\\[16pt]
(1+r)^n, & p=\infty.
\end{cases}
\]
Moreover, we have
\begin{equation}\label{eq:limitpoisson}
\lim_{r\to 1^{-}} \Phi_p(r) =  \begin{cases} \bigg\{
\dfrac{\Gamma(\frac {n}{2})\Gamma(np+1-n)}{\Gamma(\frac {np}{2})\Gamma(\frac {np+2-n}{2})}\bigg\}^{\frac {1}{p}},& 1\leq p<\infty,\\[12pt]
2^n,& p=\infty.
\end{cases}
\end{equation}
\end{lemma}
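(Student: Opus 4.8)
The plan is to reduce the computation to a single closed-form spherical integral and then to manipulate a Gauss hypergeometric function. First observe that $P_y$ is a \emph{positive} harmonic function on $\ball$: for $|x|<1$ and $|y|<1$ both $1-|x|^2|y|^2$ and $1-2x\cdot y+|x|^2|y|^2$ are strictly positive. Hence for $1\le p<\infty$ the absolute value may be dropped, and since $P_y^p$ is subharmonic (the convex increasing $t\mapsto t^p$ applied to the subharmonic $|P_y|$), the spherical means $r\mapsto M_p(r):=\int_{\sphere}P_y(r\zeta)^p\,d\sigma(\zeta)$ are non-decreasing. Therefore $\|P_y\|_{h^p}^p=\lim_{r\to1^-}M_p(r)$, which removes the supremum and reduces everything to evaluating $M_p$.

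To evaluate $M_p(r)$, pull out the numerator to write $M_p(r)=(1-r^2|y|^2)^p\int_{\sphere}(1-2r\,\zeta\cdot y+r^2|y|^2)^{-np/2}\,d\sigma(\zeta)$. Setting $s:=r|y|$ and using rotation invariance of $d\sigma$ to align $y$ with a coordinate axis (so $r\,\zeta\cdot y=s\,\zeta_1$ and $r^2|y|^2=s^2$), the integral depends only on $s$, and I would invoke the standard evaluation
\[
\int_{\sphere}(1-2s\,\zeta_1+s^2)^{-\lambda}\,d\sigma(\zeta)={}_2F_1\!\left(\lambda,\ \lambda-\tfrac n2+1;\ \tfrac n2;\ s^2\right).
\]
Taking $\lambda=np/2$ gives $M_p(r)=(1-r^2|y|^2)^p\,{}_2F_1\big(\tfrac{np}{2},\,\tfrac{n(p-1)+2}{2};\,\tfrac n2;\,r^2|y|^2\big)$. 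Since $r^2|y|^2\to|y|^2<1$, letting $r\to1^-$ is merely a matter of continuity of the hypergeometric at an interior point.

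It remains to match the stated normalization. Apply Euler's transformation ${}_2F_1(a,b;c;z)=(1-z)^{c-a-b}\,{}_2F_1(c-a,c-b;c;z)$ with $a=\tfrac{np}{2}$, $b=\tfrac{n(p-1)+2}{2}$, $c=\tfrac n2$; one computes $c-a-b=n(1-p)-1$, $c-a=\tfrac{n(1-p)}{2}$, $c-b=\tfrac{2n-2-np}{2}$, the last two being exactly the parameters of $\Phi_p$. Collecting powers of $1-|y|^2$ gives the exponent $p+\big(n(1-p)-1\big)=(p-1)(1-n)=\tfrac{p(1-n)}{p'}$, and taking $p$-th roots yields \eqref{eq:normpoisson} for $1\le p<\infty$. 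For $p=\infty$ I would maximize $P_y$ directly: rotation invariance confines the maximum to the ray $x=t\,y/|y|$, where $P_y=(1+t|y|)(1-t|y|)^{1-n}$ increases in $t$, so $\|P_y\|_{h^\infty}=(1+|y|)(1-|y|)^{1-n}=(1+|y|)^n(1-|y|^2)^{1-n}$, the $p=\infty$ case of \eqref{eq:normpoisson}. Finally, \eqref{eq:limitpoisson} follows by letting $r\to1^-$ in $\Phi_p(r)^p$ through Gauss's summation ${}_2F_1(a,b;c;1)=\tfrac{\Gamma(c)\Gamma(c-a-b)}{\Gamma(c-a)\Gamma(c-b)}$, which is legitimate since here $c-a-b=n(p-1)+1>0$; this reproduces the stated Gamma quotient, and the $p=\infty$ limit is elementary.

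The main obstacle is the hypergeometric bookkeeping in the middle two steps: correctly identifying the spherical integral as the displayed ${}_2F_1$ — which I would sanity-check against $\lambda=n/2$, where it must reduce to $(1-|x|^2)^{-1}$ because $\int_{\sphere}P(x,\zeta)\,d\sigma=1$ — and then steering the Euler transformation so that the parameters land precisely on the normalization appearing in $\Phi_p$. The analytic ingredients (monotonicity of harmonic $L^p$ means, Euler's transformation, Gauss's summation) are all standard; the genuine risk lies entirely in the parameter algebra.
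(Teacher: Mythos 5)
For $1\leq p<\infty$ your argument is essentially the paper's own proof: your ``standard evaluation'' of $\int_{\sphere}(1-2s\zeta_1+s^2)^{-\lambda}\,d\sigma(\zeta)$ is exactly the formula the paper quotes from \cite[Lemma 2.1]{LP04}, and you then apply the same Euler transformation and the same Gauss summation, with all parameter algebra ($c-a=\frac{n(1-p)}{2}$, $c-b=\frac{2n-2-np}{2}$, $c-a-b=n(1-p)-1$, exponent $(p-1)(1-n)=\frac{p(1-n)}{p'}$, and $c-a-b=n(p-1)+1>0$ for the limit) checking out. The only genuine difference is cosmetic: you work with spherical means at radius $r$ and justify passing to the supremum via subharmonicity of $P_y^p$, whereas the paper integrates the boundary function $P(y,\cdot)^p$ directly (and since $P_y$ extends continuously to $\overline{\ball}$, uniform convergence would do in place of your subharmonicity argument).

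One point deserves attention at $p=\infty$, which the paper dismisses as trivial. Your maximization is correct and gives $\|P_y\|_{h^\infty}=(1+|y|)(1-|y|)^{1-n}=(1+|y|)^n(1-|y|^2)^{1-n}$, but this is \emph{not} literally ``the $p=\infty$ case of \eqref{eq:normpoisson}'' as printed: with $\Phi_\infty(r)=(1+r)^n$ evaluated at $r=|y|^2$, the statement reads $(1+|y|^2)^n(1-|y|^2)^{1-n}$, and $(1+|y|)^n\neq(1+|y|^2)^n$ in general (for $n=2$, $|y|=\tfrac12$, your value is $3$ while the printed formula gives $\tfrac{25}{12}$). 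Your computation is the right one — it shows the lemma's statement contains a typo, namely $\Phi_\infty(r)$ should be $(1+\sqrt{r}\,)^n$ — but you asserted agreement where there is a discrepancy, and you should have flagged it. Note that both versions satisfy $\lim_{r\to1^-}\Phi_\infty(r)=2^n$, and only the limit \eqref{eq:limitpoisson} is used downstream (in the norm and essential norm computations, where $\Phi_{p'}$ with $p'=\infty$ enters only through the ratio tending to $1$), so nothing else in the paper is affected.
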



\begin{proof}
Since the case $p=\infty$ is trivial, we shall assume that $1\leq p<\infty$. We recall the following formula from \cite[Lemma 2.1]{LP04}:
\begin{equation}\label{eqn:integration-on-sphere1}
\int\limits_{\sphere} \frac {d\sigma(\zeta)} {|x-\zeta|^{2s}} =
\hyperg {s} {s-\frac {n}{2} +1} {\frac {n}{2}} {|x|^2}.
\end{equation}
It follows that
\begin{align*}
\|P_y\|_{h^p}^p~=~&\int\limits_{\sphere}\frac{(1-|y|^2)^p}{|y-\zeta|^{np}}d\sigma(\zeta)\\
~=~&(1-|y|^2)^p\hyperg{\frac {np}{2}} {\frac {np-n+2}{2}} {\frac {n}{2}}{|y|^2}\\
=~&(1-|y|^2)^{p+n-1-np} \hyperg{\frac {n(1-p)}{2}} {\frac {2n-2-np}{2}}{\frac {n}{2}}{|y|^2},
\end{align*}
which is exactly \eqref{eq:normpoisson}.
In the last equality we used the formula
\begin{equation*}\label{eqn:euler}
\hyperg{a}{b}{c}{z} = (1-z)^{c-a-b} \hyperg{c-a}{c-b}{c}{z}.
\end{equation*}
Also, \eqref{eq:limitpoisson} is immediate from \eqref{eq:normpoisson} and the following well-known formula
\begin{equation*}\label{eqn:gauss}
\hyperg{a}{b}{c}{1}=\frac {\Gamma(c) \Gamma(c-a-b)} {\Gamma(c-a)
\Gamma(c-b)}.
\end{equation*}
%
%
The proof is complete.
\end{proof}

\section{Proof of Theorem \ref{thm:main1}}

\subsection{Necessity}

To prove the necessity, it suffices to show that if $W_{\varphi,\psi}$ sends $h^p(\ball)$ into itself then
the conditions (i) and (ii) in Theorem \ref{thm:main1} are satisfied.
We shall in fact prove the following slightly stronger result.

\begin{proposition}\label{prop:harmcty}
Suppose $n\geq 3$. Let $\varphi: \ball\to \ball$ be a $C^2$ map with Jacobian
not changing sign, and let $\psi$ be a $C^2$ function on $\ball$. Then
$W_{\varphi,\psi}$ preserves harmonic functions if and only if the conditions
(i) and (ii) in Theorem \ref{thm:main1} are satisfied.
\end{proposition}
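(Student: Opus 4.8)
The plan is to obtain both implications from a single computation of $\Delta\big(\psi\cdot(f\circ\varphi)\big)$, reading off from it pointwise conditions on $\varphi$ and $\psi$. I would first dispose of sufficiency by reducing to the Kelvin transform. The group $\mathrm{M\ddot{o}b}(n)$ is generated by reflections in hyperplanes, which are isometries (so composition with them preserves harmonicity and carries the weight $\equiv 1$), and inversions in spheres, which are precisely the setting of the classical Kelvin transform $f\mapsto|x-a|^{2-n}\,(f\circ\varphi)$, known to be harmonic whenever $f$ is. Since $\psi_0:=|D\varphi|^{\frac{n-2}{2}}$ is a multiplicative cocycle under composition (by the chain rule $J_{\varphi_1\circ\varphi_2}=(J_{\varphi_1}\circ\varphi_2)\,J_{\varphi_2}$), matching the composition law of the operators $W_{\varphi,\psi}$, it follows that $W_{\varphi,\psi_0}$ preserves harmonic functions for every M\"obius $\varphi$, hence so does $W_{\varphi,c\psi_0}$ for any constant $c$.

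For necessity, the key identity is the product rule
\[
\Delta\big(\psi\,(f\circ\varphi)\big)=(\Delta\psi)(f\circ\varphi)+2\,\nabla\psi\cdot\nabla(f\circ\varphi)+\psi\,\Delta(f\circ\varphi),
\]
combined with the chain-rule expressions $\nabla(f\circ\varphi)=(D\varphi)^t\big((\nabla f)\circ\varphi\big)$ and
\[
\Delta(f\circ\varphi)=\sum_{k,l}\big(D\varphi(D\varphi)^t\big)_{kl}\,(\partial_k\partial_l f)\circ\varphi+\sum_{k}(\Delta\varphi^k)\,(\partial_k f)\circ\varphi.
\]
Fixing $x_0\in\ball$ and setting $y_0=\varphi(x_0)$, I would use the fact that harmonic polynomials of degree at most two realize, at the single point $y_0$, an arbitrary value $f(y_0)$, an arbitrary gradient $(\nabla f)(y_0)$, and an arbitrary symmetric traceless Hessian $(\mathrm{Hess}\,f)(y_0)$, independently of one another. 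Demanding $\Delta(\psi\,(f\circ\varphi))(x_0)=0$ for all such $f$ then decouples into three pointwise conditions: (a) $\Delta\psi=0$; (b) $2\,D\varphi\,\nabla\psi+\psi\,\Delta\varphi=0$; and (c) $\mathrm{tr}\big(D\varphi(D\varphi)^t\,H\big)=0$ for every symmetric traceless $H$, i.e. $D\varphi(D\varphi)^t=\lambda I_n$ for a scalar $\lambda$. Taking determinants forces $\lambda=|D\varphi|^2$, so (c) is exactly the Cauchy--Riemann system \eqref{eqn:CRsys}; since $\varphi$ is $C^2$ (hence in $W^{1,n}_{loc}$) and non-constant by our standing assumption, Lemma \ref{lem:Liouville} shows $\varphi$ is the restriction of a M\"obius transformation, proving (i).

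Condition (ii) then follows from the gradient equation (b). As $\varphi$ is now M\"obius, $\psi_0=|D\varphi|^{\frac{n-2}{2}}$ is positive on $\ball$, and by the sufficiency direction it also satisfies $2\,D\varphi\,\nabla\psi_0+\psi_0\,\Delta\varphi=0$. Forming $\nabla(\psi/\psi_0)=\psi_0^{-2}(\psi_0\nabla\psi-\psi\nabla\psi_0)$ and substituting the two gradient equations gives $D\varphi\,\nabla(\psi/\psi_0)=0$; since $J_\varphi\neq 0$ on all of $\ball$ for a M\"obius map, $D\varphi$ is invertible, so $\psi/\psi_0$ has vanishing gradient on the connected set $\ball$ and is therefore a constant, yielding $\psi=c\,|D\varphi|^{\frac{n-2}{2}}$.

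The step I expect to be the main obstacle is the decoupling argument: justifying carefully that harmonic functions prescribe the value, gradient, and an \emph{arbitrary traceless symmetric} Hessian at $y_0$ independently, so that the vanishing of the Laplacian genuinely separates into the three conditions (a)--(c), and then recognizing (c) as the Cauchy--Riemann system. The remaining work is routine bookkeeping with the chain rule together with the invocation of Lemma \ref{lem:Liouville}.
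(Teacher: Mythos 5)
Your proof is correct, but it departs from the paper's in two genuinely different places, while the part you worried about most is in fact the paper's own argument in disguise. For necessity, the paper's Lemma \ref{thm:app1} derives the same identity \eqref{eq:deltag} and then tests it against $f\equiv 1$, $f=x_i$, $f=x_i^2-x_j^2$ and $f=x_ix_j$ --- precisely an explicit spanning family realizing an arbitrary value, gradient, and traceless symmetric Hessian at $y_0$ --- so your abstract decoupling into (a)--(c), the identification of (c) with $D\varphi(D\varphi)^t=\lambda I_n$ and hence \eqref{eqn:CRsys} after taking determinants, and the appeal to Lemma \ref{lem:Liouville} all coincide with the paper's route. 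Where you differ: (1) for sufficiency the paper verifies the system \eqref{E:gp1}--\eqref{E:gp3} by direct differentiation of the spherical reflection \eqref{eqn:refl} with $\psi=|D\varphi|^{\frac{n-2}{2}}$, whereas your Kelvin-transform-plus-cocycle argument imports the classical fact and saves that computation; note, however, that running the cocycle over an arbitrary word in the generators needs care with intermediate domains (an intermediate reflection can send part of the current domain to $\infty$, so one needs Kelvin transforms and harmonicity at infinity, or --- more simply --- the explicit normal form in Lemma \ref{lem:Liouville}, which involves at most one inversion, centered at $a$ with $|a|>1$, plus isometries and dilations); (2) for condition (ii) the paper computes explicitly that $-\tfrac{1}{2|D\varphi|^2}(D\varphi)^t\Delta\varphi=\tfrac{n-2}{2}\nabla(\log|D\varphi|)$ and integrates, while your quotient trick $D\varphi\,\nabla(\psi/\psi_0)=0$, bootstrapping equation (b) for $\psi_0$ from the already-established sufficiency, avoids that identity altogether and needs only invertibility of $D\varphi$ and connectedness of $\ball$ --- arguably cleaner. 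One caveat you share with the paper: the Hessian condition (c) arrives multiplied by $\psi(x_0)$, so the pointwise argument yields \eqref{eqn:CRsys} only where $\psi\neq 0$ (and indeed $\psi\equiv 0$ makes $W_{\varphi,\psi}$ trivially harmonicity-preserving for any $\varphi$); the paper divides by $\psi$ just as silently, so you are no worse off, but a one-line remark excluding $\psi\equiv 0$ and handling the zero set of $\psi$ by continuity would tighten both arguments.
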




\begin{lemma}\label{thm:app1}
Let $\Omega$ be a domain in $\bbR^n$.  Let $\varphi: \Omega \to \varphi(\Omega) \subset \bbR^n$
be a $C^2$ map and $\psi$ be a $C^2$ function on $\Omega$.
Then $W_{\varphi,\psi}$ preserves harmonic functions if and only if
the pair $\{\varphi, \psi\}$ satisfies
\begin{subequations}\label{E:gp}
\begin{gather}
\Delta \psi = 0, \label{E:gp1}\\
\psi\Delta\varphi + 2(D\varphi) (\nabla \psi) = 0, \label{E:gp2}\\
D\varphi(D\varphi)^t = |D\varphi|^2 I_n \label{E:gp3}.
\end{gather}
\end{subequations}
\end{lemma}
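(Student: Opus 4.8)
The plan is to compute $\Delta\big(\psi\cdot(f\circ\varphi)\big)$ explicitly and to sort the result according to the order of differentiation of $f$ at the point $\varphi(x)$. Writing $\varphi=(\varphi_1,\dots,\varphi_n)$, applying the chain rule twice to $u:=f\circ\varphi$, and then expanding $\Delta(\psi u)=(\Delta\psi)u+2\nabla\psi\cdot\nabla u+\psi\,\Delta u$, I expect to reach the master identity
\[
\Delta\big(\psi\,(f\circ\varphi)\big)
=(\Delta\psi)\,(f\circ\varphi)
+\sum_{k=1}^{n}\big[\psi\,\Delta\varphi_k+2(D\varphi\,\nabla\psi)_k\big]\,(\partial_k f)\circ\varphi
+\psi\sum_{k,l=1}^{n}\big(D\varphi(D\varphi)^t\big)_{kl}\,(\partial_k\partial_l f)\circ\varphi ,
\]
where I use that the coefficient of $(\partial_k\partial_l f)\circ\varphi$ in $\Delta(f\circ\varphi)$ is exactly $\sum_i(\partial_i\varphi_k)(\partial_i\varphi_l)=\big(D\varphi(D\varphi)^t\big)_{kl}$. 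The three coefficient-blocks here — the multiplier of $f$, the multiplier of its gradient, and the symmetric matrix multiplying its Hessian — are precisely the left-hand sides of \eqref{E:gp1}, \eqref{E:gp2} and \eqref{E:gp3}, so the lemma reduces to an independence statement about the $2$-jet of $f$ at $\varphi(x)$.

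For the sufficiency direction I would simply substitute the three conditions into the identity. Equation \eqref{E:gp1} annihilates the first term; \eqref{E:gp2}, read coordinate-by-coordinate, annihilates the second; and \eqref{E:gp3}, i.e.\ $\big(D\varphi(D\varphi)^t\big)_{kl}=|D\varphi|^2\delta_{kl}$, collapses the last term to $\psi\,|D\varphi|^2\sum_k(\partial_k^2 f)\circ\varphi=\psi\,|D\varphi|^2\,(\Delta f)\circ\varphi$, which vanishes because $f$ is harmonic. Hence $\psi\,(f\circ\varphi)$ is harmonic.

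For necessity, fix an arbitrary $x_0\in\Omega$, set $y_0=\varphi(x_0)$, and test the identity against the globally defined harmonic polynomials $f\equiv 1$, $f(y)=(y-y_0)_k$, and $f(y)=\tfrac12\sum_{k,l}H_{kl}(y-y_0)_k(y-y_0)_l$ with $H$ symmetric and $\trace H=0$ (so the quadratic is harmonic). Centering at $y_0$ makes each test isolate one block: the constant gives $(\Delta\psi)(x_0)=0$; the linear functions (value and Hessian vanishing at $y_0$) give $\big[\psi\,\Delta\varphi+2D\varphi\,\nabla\psi\big]_k(x_0)=0$ for each $k$; and the quadratics (value and gradient vanishing at $y_0$) give $\psi(x_0)\sum_{k,l}\big(D\varphi(D\varphi)^t\big)_{kl}(x_0)H_{kl}=0$ for every traceless symmetric $H$. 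Since $D\varphi(D\varphi)^t$ is symmetric and the Frobenius-orthogonal complement of the traceless symmetric matrices inside the symmetric matrices is $\bbR\,I_n$, this forces $\psi(x_0)\,D\varphi(D\varphi)^t(x_0)=\lambda(x_0)I_n$ for a scalar $\lambda(x_0)$; taking determinants of $D\varphi(D\varphi)^t=(\lambda/\psi)I_n$ gives $(\lambda/\psi)^n=J_\varphi^2$, hence $\lambda/\psi=|D\varphi|^2$, which is \eqref{E:gp3}. As $x_0$ is arbitrary, \eqref{E:gp1} and \eqref{E:gp2} hold throughout $\Omega$.

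The one genuinely delicate point, and the step I expect to be the main obstacle, is the passage to \eqref{E:gp3}: the test functions only yield $\psi\cdot\big(D\varphi(D\varphi)^t-|D\varphi|^2I_n\big)=0$, so the Cauchy--Riemann system is immediate only where $\psi\ne 0$. I would dispose of the zero set of $\psi$ by continuity, observing that $D\varphi(D\varphi)^t-|D\varphi|^2I_n$ is continuous and vanishes on the open set $\{\psi\ne0\}$, hence on its closure, so that \eqref{E:gp3} propagates to all of $\Omega$ whenever $\{\psi\ne0\}$ is dense — which is the relevant situation, and in particular covers the application in Proposition \ref{prop:harmcty}. By contrast, the chain-rule expansion and the fact that harmonic quadratics realize every traceless Hessian are routine, so it is only the degenerate behaviour of $\psi$ on its zero set that requires care.
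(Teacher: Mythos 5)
Your proposal is correct and follows essentially the same route as the paper: the identical master identity sorting $\Delta\big(\psi\,(f\circ\varphi)\big)$ by the $2$-jet of $f$ at $\varphi(x)$, with sufficiency by direct substitution and necessity by testing against harmonic polynomials of degree at most $2$; your traceless-Hessian quadratics are just a coordinate-free packaging of the paper's explicit choices $f(x)=x_i^2-x_j^2$ and $f(x)=x_ix_j$. The only substantive divergence is the point you flag yourself: the quadratic tests really yield $\psi\cdot\big(D\varphi(D\varphi)^t-|D\varphi|^2 I_n\big)=0$, and you remove the zero set of $\psi$ by continuity under a density hypothesis on $\{\psi\neq 0\}$, whereas the paper's own proof silently drops the factor $\psi$ when deducing \eqref{eq:fact11} and \eqref{eq:fact22}. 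Your caution is warranted rather than a defect: the ``only if'' direction as literally stated fails for $\psi\equiv 0$, since then $W_{\varphi,\psi}=0$ preserves harmonic functions for an arbitrary $C^2$ map $\varphi$ while \eqref{E:gp3} is a condition on $\varphi$ alone; so your density caveat (which holds in the application to Proposition \ref{prop:harmcty}) actually repairs a gap the paper glosses over.
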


\begin{proof}
A lengthy, but straightforward calculation yields
\begin{equation}\label{eq:deltag}
\Delta (W_{\varphi, \psi} f) = \psi\, \trace \big\{(D\varphi)^t (Hf) (D\varphi) \big\}
+ \nabla f \cdot \big\{\psi \Delta\varphi +2 (D\varphi) \nabla \psi \big\} + W_{\varphi,\Delta \psi}f,
\end{equation}
where
\[
Hf:=\left(\frac{\partial^2 f}{\partial x_i \partial x_j} \right)_{1\leq i, j\leq n}
\]
is the Hessian matrix of the function $f$ and $\Delta\varphi=\left(\Delta\varphi^{(1)}, \ldots, \Delta\varphi^{(n)}\right)^{t}$.
It is then easy to see that the conditions \eqref{E:gp1}--\eqref{E:gp3} imply that
\[
\Delta (W_{\varphi, \psi}f) = \psi\,  |D\varphi|^2\, \Delta f.
\]
This proves the ``if'' part of our statement.

To prove the ``only if'' part, we first take $f\equiv 1$ in \eqref{eq:deltag}. Then the assumption $\Delta (W_{\varphi, \psi}f)=0$
immediately yields \eqref{E:gp1}.

Next, we take $f(x)=x_i$ in \eqref{eq:deltag}, for $i=1,2,\cdots,n$. The assumption $\Delta (W_{\varphi, \psi}f)=0$,
along with \eqref{E:gp1} then yields \eqref{E:gp2}.

Now we take $f(x)=x_i^2-x_j^2$ in \eqref{eq:deltag}, for $i,j=1,2,\cdots,n$. Then, together with \eqref{E:gp1} and \eqref{E:gp2},
the assumption $\Delta (W_{\varphi, \psi}f)=0$ implies
\[
\sum_{k=1}^{n} \left\{ \left(\frac {\partial\varphi^{(i)}} {\partial x_k}\right)^2
- \left(\frac {\partial\varphi^{(j)}} {\partial x_k}\right)^2 \right\} = 0.
\]
Namely,
\begin{equation}\label{eq:fact11}
|\nabla \varphi^{(i)}|=|\nabla \varphi^{(j)}| \quad \text{for all } i,j\in \{1,2,\cdots,n\}.
\end{equation}

Finally, we take $f(x)=x_i x_j$ in \eqref{eq:deltag}, for $i,j\in \{1,2,\cdots,n\}$ with $i\neq j$.
Then it follows from \eqref{E:gp1} and \eqref{E:gp2} and the assumption $\Delta (W_{\varphi, \psi}f)=0$ that
\[
\sum_{k=1}^{n} \left(\frac {\partial\varphi^{(i)}} {\partial x_k}\right)
\left(\frac {\partial\varphi^{(j)}} {\partial x_k}\right) = 0,
\]
that is,
\begin{equation}\label{eq:fact22}
\nabla \varphi^{(i)} \cdot \nabla \varphi^{(j)} = 0 \quad \text{for all } i,j\in \{1,2,\cdots,n\} \text{ with } i\neq j.
\end{equation}
This, together with \eqref{eq:fact11}, gives \eqref{E:gp3}, and the proof is complete.
\end{proof}

\begin{proof}[\textbf{Proof of Proposition \ref{prop:harmcty}}]

To prove the ``if'' part, it suffices to verify \eqref{E:gp1}-\eqref{E:gp3}
for all reflections (in spheres or hyperplanes) $\varphi$ and $\psi=|D\varphi|^{\frac {n-2}{2}}$.
The verifications \eqref{E:gp1}-\eqref{E:gp3} for reflections in hyperplanes
are trivial.  We sketch the calculations for the reflection \eqref{eqn:refl} in the sphere
$\sphere(a,r)$. More explicitly,
\[
\varphi(x) = a + \frac {r^2} {|x-a|^2} (x-a)
\]
with $a \in \bbR^n\setminus \overline{B}$ (otherwise contradicts $\varphi(\ball) \subset \ball$,
since $\varphi(a)=\infty$). Since $\varphi$ is conformal, it trivially satisfies \eqref{E:gp3}.
Also, it is easy to show that
\[
\psi (x) =|D\varphi(x) |^{\frac {n-2}{2}} = \frac {r^{n-2}} {|x-a|^{n-2}}
\]
and hence $\Delta \psi (x) = 0$ for all $x\in \ball$, so \eqref{E:gp1} holds.
Finally, straightforward calculations yield
\begin{align*}
\Delta \varphi(x) ~=~& \frac {2(2-n) r^2} {|x-a|^4} (x-a), \\
\nabla \psi (x) ~=~& \frac {(2-n) r^{n-2}}{|x-a|^n} (x-a),\\
D\varphi (x) ~=~& \frac {r^2} {|x-a|^{2}} [I_n - 2 Q(x-a)],
\end{align*}
where $Q(x)$ is the matrix with entries $x_i x_j/|x|^2$, and \eqref{E:gp2} easily follows.

Now we suppose that $W_{\varphi, \psi}$ preserves harmonic functions.  Then, by Lemma \ref{thm:app1},
the pair $\{\varphi, \psi\}$ satisfies \eqref{E:gp1}-\eqref{E:gp3}. In particular, \eqref{E:gp3},
by Lemma \ref{lem:Liouville}, implies that
$\varphi$ is the restriction to $\ball$ of a M\"{o}bius transformation of $\widehat{\mathbb{R}}^n$.


%
Next, it follows from \eqref{E:gp2} that
\[
\psi (D\varphi)^t \Delta\varphi + 2(D\varphi)^t (D\varphi) (\nabla \psi) = 0.
\]
Combing with \eqref{E:gp3}, this gives
\[
\nabla \psi = - \frac {\psi}{2|D\varphi|^2} (D\varphi)^t \Delta \varphi.
\]
On the other hand, a straightforward calculation shows
\[
- \frac {1}{2|D\varphi|^2} (D\varphi)^t \Delta \varphi ~=~ \frac {n-2}{2}\, \nabla \left(\log |D\varphi|\right).
\]
It follows that
\[
\nabla \psi ~=~ \frac {n-2}{2}\, \psi\ \nabla \left(\log |D\varphi|\right),
\]
which in turn shows that
\[
\psi=C|D\varphi|^{\frac {n-2}{2}}
\]
for some constant $C$. The proof is complete.
\end{proof}

\subsection{Sufficiency}


Assume that the conditions (i) and (ii) in Theorem \ref{thm:main1} hold.
In view of the ``if'' part of Proposition \ref{prop:harmcty}, we only need to show that
$W_{\varphi,\psi}$ is bounded on $h^p(\ball)$.

We first show that $|D\varphi|\in L^{\infty}(\ball)$.
Recall that $\varphi$ has the form
\[
\varphi(x) = b + \frac {\alpha A (x-a)} {|x-a|^{\epsilon}}.
\]
where $a\in \mathbb{R}^n\setminus \ball$, $b\in \ball$, $\alpha\in \mathbb{R}$, $A$ is an
orthogonal matrix, and $\epsilon$ is either $0$ or $2$.
The case $\epsilon=0$
is trivial. When $\epsilon=2$, it is clear that $\varphi(a)=\infty$. Since $\varphi(\ball)\subset \ball$, we must have $|a|>1$.
Thus,
\[
|D\varphi(x)| ~=~ \frac {|\alpha|} {|x-a|^2} ~\leq~ \frac {|\alpha|} {(|a|-1)^2}
\]
for all $x\in \ball$.

Next, we show that the inequality
\begin{equation}\label{eqn:Poisphi}
\int\limits_{\sphere} P(\varphi(r\zeta),\eta) \, |D\varphi(r\zeta)|^{\frac {n-2}{2}} d\sigma(\zeta)
~\leq~ \frac {1+|\varphi(0)|} {(1-|\varphi(0)|)^{n-1}}  |D\varphi(0)|^{\frac {n-2}{2}}
\end{equation}
holds for every $r\in [0,1)$ and every $\eta\in \sphere$. 

Indeed, by Theorem \ref{thm:main1}, for each fixed $\eta\in \sphere$, the function
\[
x ~\longmapsto~ P(\varphi(x),\eta) |D\varphi(x)|^{\frac {n-2}{2}}
\]
is harmonic in $\ball$. Then, for every $r\in (0,1)$,
\[
\int\limits_{\sphere} P(\varphi(r\zeta),\eta) \, |D\varphi(r\zeta)|^{\frac {n-2}{2}} d\sigma(\zeta)
~=~ P(\varphi(0),\eta) |D\varphi(0)|^{\frac {n-2}{2}},
\]
by the mean-value property of harmonic functions. The inequality \eqref{eqn:Poisphi} then follows, since
\[
\sup_{\eta \in \sphere} P(\varphi(0),\eta) ~\leq~ \frac {1+|\varphi(0)|} {(1-|\varphi(0)|)^{n-1}}.
\]

The rest of the proof is divided into three cases.

\subsubsection*{Case I: $p=1$}

Let $f\in h^1(\ball)$. By \cite[theorem 6.13]{ABR01}, there exists a complex measure $\mu$ on $\sphere$ such that $f=P[\mu]$
and $\|f\|_{h^1}=\|\mu\|$. It follows that
\begin{align*}
\int_{\sphere}&|f(\varphi(r\zeta))||D\varphi(r\zeta)|^{\frac {n-2}{2}}d\sigma(\zeta)\notag\\
&~\leq~ \int_{\sphere} \bigg\{ \int_{\sphere} P(\varphi(r\zeta), \eta)d|\mu|(\eta) \bigg\} |D\varphi(r\zeta)|^{\frac {n-2}{2}}d\sigma(\zeta)\notag\\
&~=~ \int_{\sphere}\bigg\{\int_{\sphere}P(\varphi(r\zeta), \eta)|D\varphi(r\zeta)|^{\frac {n-2}{2}}d\sigma(\zeta)\bigg\}d|\mu|(\eta)\\
&~\leq~ \bigg\{ \frac {1+|\varphi(0)|} {(1-|\varphi(0)|)^{n-1}}  |D\varphi(0)|^{\frac {n-2}{2}}\bigg\}\, \|\mu\|.
\end{align*}
where in the last line we used \eqref{eqn:Poisphi}. This gives
\begin{equation}\label{eqn:upperest1}
\| W_{\varphi,\psi} \|_{h^1\to h^1} ~\leq~ \frac {1+|\varphi(0)|} {(1-|\varphi(0)|)^{n-1}}  |\psi(0)|.
\end{equation}

\subsubsection*{Case II: $1< p<\infty$}
Let $f\in h^p(\ball)$. Then by \cite[Theorems 6.13 and 6.39]{ABR01}, $f=P[f^{\ast}]$ for some $f^{\ast} \in L^p(\sphere)$.
Thus,
\[
|f(\varphi(r\zeta))| ~\leq~ \int\limits_{\sphere} P(\varphi(r\zeta),\eta)|f^{\ast}(\eta)|  d\sigma(\eta).
\]
By Jensen's inequality,
\begin{equation}\label{eqn:jensen}
|f(\varphi(r\zeta))|^p ~\leq~ \int\limits_{\sphere}|f^{\ast}(\eta)|^p   P(\varphi(r\zeta),\eta) d\sigma(\eta),
\end{equation}
and hence
\begin{align*}
\int\limits_{\sphere} & \left|f(\varphi(r\zeta)) \, |D\varphi(r\zeta)|^{\frac {n-2}{2}}\right|^p d\sigma(\zeta)\notag \\
& \quad ~\leq~ \int\limits_{\sphere} |f^{\ast}(\eta)|^p
\bigg\{ \int\limits_{\sphere} P(\varphi(r\zeta),\eta) \, |D\varphi(r\zeta)|^{\frac {p(n-2)}{2}} d\sigma(\zeta) \bigg\}d\sigma(\eta)\\
& \quad ~\leq~ \big \| |D\varphi|^{\frac {n-2}{2}}\big\|_{\infty}^{p-1} \int\limits_{\sphere} |f^{\ast}(\eta)|^p
\bigg\{ \int\limits_{\sphere} P(\varphi(r\zeta),\eta) \, |D\varphi(r\zeta)|^{\frac {n-2}{2}} d\sigma(\zeta) \bigg\}d\sigma(\eta)\\
& \quad ~\leq~ \bigg\{ \frac {1+|\varphi(0)|} {(1-|\varphi(0)|)^{n-1}}  |D\varphi(0)|^{\frac {n-2}{2}} \bigg\}
\big \| |D\varphi|^{\frac {n-2}{2}}\big\|_{\infty}^{p-1} \, \| f^{\ast}\|_{L^p(\sphere)}^p.
\end{align*}
Again, in the last line we used \eqref{eqn:Poisphi}. This implies
\begin{equation}\label{eqn:upperest2}
\|W_{\varphi,\psi}\|_{h^p\to h^p} ~\leq~  \left\{\frac {1+|\varphi(0)|} {(1-|\varphi(0)|)^{n-1}}
|\psi(0)| \right\}^{\frac {1}{p}} \| \psi \|_{\infty}^{1-\frac {1}{p}}.
\end{equation}

\subsubsection*{Case III: $p=\infty$}  Trivial.


\section{Proof of Theorem \ref{thm:main3}}


%

%
\subsection{The upper estimate.}

When $p=1$, the upper estimate
\[
\|W_{\varphi,\psi} \|_{h^1\to h^1} ~\leq~ \bigg(\frac{1+|\varphi(0)|}{1-|\varphi(0)|}\bigg)^{\frac {n}{2}}
\]
follows immediately from \eqref{eqn:upperest1}, together with the observation that
\[
\psi(0) ~=~ |D\varphi(0)|^{\frac {n-2}{2}} ~=~ \big( 1-|\varphi(0)|^2 \big)^{\frac {n-2}{2}}
\]
for any $\varphi\in \mob(\ball)$.

When $p=\infty$, the upper estimate reads
\[
\|W_{\varphi,\psi} \|_{h^\infty\to h^\infty} ~\leq~ \bigg(\frac{1+|\varphi(0)|}{1-|\varphi(0)|}\bigg)^{\frac {n-2}{2}},
\]
which is immediate, since
\[
\psi(x) ~=~ |D\varphi(x)|^{\frac {n-2}{2}} ~\leq~ \bigg( \frac {1+|\varphi(0)|} {1-|\varphi(0)|} \bigg)^{\frac {n-2}{2}}
\]
for any $\varphi\in \mob(\ball)$ and any $x\in \ball$, in view of \eqref{eqn:jac-est}.

Now we let $1<p<\infty$ and $f\in h^p(\ball)$. By Lemma \ref{lem: aast}, we see that
\begin{align*}
\|W_{\varphi,\psi} f\|_{h^p}^p
~=~\int\limits_{\sphere} |f^\ast(\varphi(\zeta))|^p |D\varphi(\zeta)|^{\frac {(n-2)p}{2}}  d\sigma(\zeta).
\end{align*}
Making the change of variables  $\eta=\varphi(\zeta)$ ,  using Lemma \ref{lem:Jacobiformula} and \eqref{eqn:jac-est},
we obtain
\begin{align*}
\|W_{\varphi,\psi} f\|_{h^p} ~=~& \bigg\{ \int\limits_{\sphere}|f^\ast(\eta)|^p
\big| D\varphi^{-1}(\eta) \big|^{(n-1)-\frac {p(n-2)}{2}}d\sigma(\eta) \bigg\}^{\frac {1}{p}} \\
\leq~& \left(\dfrac {1+|\varphi(0)|}{1-|\varphi(0)|} \right)^{\left|\frac {n-1}{p}-\frac {n-2}{2}\right|}
\|f^{\ast}\|_{L^p(\sphere)}\\
=~& \left(\dfrac {1+|\varphi(0)|}{1-|\varphi(0)|} \right)^{\left|\frac {n-1}{p}-\frac {n-2}{2}\right|}
\|f\|_{h^p}.
\end{align*}

\subsection{The lower estimate.}

Let $\varphi\in \mob(\ball)$ and $\psi=|D\varphi|^{\frac {n-2}{2}}$.

First, it is obvious that
\[
\|W_{\varphi,\psi}\|_{h^\infty\to h^\infty} ~\geq~ \|\psi\|_{\infty}
~\geq~ \bigg(\frac{1+|\varphi(0)|}{1-|\varphi(0)|}\bigg)^{\frac {n-2}{2}}.
\]

Now we assume that $1\leq p<\infty$. We denote by $W_{\varphi,\psi}^{\star}$ the adjoint of $W_{\varphi,\psi}$ on the dual space
$h^{p^{\prime}}$ of $h^p(\ball)$, where $p^{\prime}$ is the conjugate exponent of $p$.

\begin{lemma}\label{lem:adjoint}
$W_{\varphi,\psi}^{\star} P_y = \psi(y) P_{\varphi(y)}$
for every $y\in \ball$.
\end{lemma}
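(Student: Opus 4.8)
The plan is to exploit the fact that, under the natural duality pairing between $h^p(\ball)$ and its dual $h^{p'}(\ball)$, the extended Poisson kernel $P_y$ represents the point-evaluation functional $f\mapsto f(y)$. For $1<p<\infty$ every $f\in h^p(\ball)$ satisfies $f=P[f^\ast]$, so pairing $f$ against $P_y$ through the boundary functions gives
\[
\langle f, P_y\rangle = \int_{\sphere} f^\ast(\zeta)\,(P_y)^\ast(\zeta)\,d\sigma(\zeta) = \int_{\sphere} f^\ast(\zeta)\,P(y,\zeta)\,d\sigma(\zeta) = f(y),
\]
since $(P_y)^\ast = P(y,\cdot)$ and the last integral is exactly the Poisson integral of $f^\ast$ evaluated at $y$. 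I would record this reproducing identity first, as it is the engine of the whole argument.

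With this in hand, the identity is verified by testing both sides against an arbitrary $f\in h^p(\ball)$. Using that $W_{\varphi,\psi}f\in h^p(\ball)$ (which is bounded on $h^p(\ball)$ by the upper estimate established above) together with the reproducing identity applied twice, once at the point $y$ and once at the point $\varphi(y)\in\ball$, I compute
\[
\langle W_{\varphi,\psi}f, P_y\rangle = (W_{\varphi,\psi}f)(y) = \psi(y)\,f(\varphi(y)) = \psi(y)\,\langle f, P_{\varphi(y)}\rangle = \langle f, \psi(y)\,P_{\varphi(y)}\rangle,
\]
where the second equality is just the definition $W_{\varphi,\psi}f=\psi\cdot(f\circ\varphi)$ and the last uses that $\psi(y)$ is a real scalar. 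Since this holds for every $f\in h^p(\ball)$, the defining relation $\langle W_{\varphi,\psi}f, g\rangle = \langle f, W_{\varphi,\psi}^{\star}g\rangle$ of the adjoint forces $W_{\varphi,\psi}^{\star}P_y = \psi(y)\,P_{\varphi(y)}$ as elements of $h^{p'}(\ball)$.

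The only point requiring genuine care is the reproducing identity in the endpoint case $p=1$, where $h^1(\ball)$ consists of Poisson integrals $f=P[\mu]$ of complex measures that need not have $L^1$ boundary functions. There the appropriate pairing of $f$ with $g\in h^\infty(\ball)$ is $\langle f,g\rangle=\int_{\sphere} g^\ast\,d\mu$, and one checks by Fubini that
\[
\langle P[\mu], P_y\rangle = \int_{\sphere} (P_y)^\ast(\zeta)\,d\mu(\zeta) = \int_{\sphere} P(y,\zeta)\,d\mu(\zeta) = P[\mu](y),
\]
so the reproducing identity persists and the computation above goes through verbatim. I expect this endpoint bookkeeping to be the only mild obstacle; the heart of the proof is the purely formal chain of equalities above, and no estimate is needed.
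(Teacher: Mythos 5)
Your proposal is correct and follows essentially the same route as the paper: both establish the reproducing identity $\langle f, P_y\rangle = f(y)$ via the boundary pairing and then compute $\langle f, W_{\varphi,\psi}^{\star}P_y\rangle = \langle \psi\cdot(f\circ\varphi), P_y\rangle = \psi(y)f(\varphi(y)) = \langle f, \psi(y)P_{\varphi(y)}\rangle$. Your extra care with the $p=1$ endpoint (pairing $P[\mu]$ against $h^{\infty}$ through the measure $\mu$ rather than a possibly nonexistent $L^1$ boundary function) is a sound refinement of a point the paper's proof passes over silently, but it does not change the argument's structure.
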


\begin{proof}
Let $\langle \cdot, \cdot\rangle$ denote the duality pairing between $h^p(\ball)$ and $h^{p^{\prime}}(\ball)$.
More precisely, for $f\in h^p(\ball)$ and $g\in h^{p^{\prime}}(\ball)$,
\[
\langle f, g\rangle=\int\limits_{\sphere} f^{\ast}(\zeta) g^{\ast}(\zeta) d\sigma(\zeta),
\]
with $f^{\ast}$ and $g^{\ast}$ the boundary functions of $f$ and $g$ respectively.
Let $y\in \ball$ be fixed and let $f$ be an arbitrary function in $h^p(\ball)$.
Note that $(P_y)^{\ast}=P(y, \cdot)$ and hence
\[
\langle f, P_y\rangle = \int\limits_{\sphere} f^{\ast}(\zeta) P(y, \zeta) d\sigma(\zeta) = f(y).
\]
Therefore,
\begin{align*}
\langle f, W_{\varphi,\psi}^{\star} P_y\rangle = \langle \psi\cdot (f\circ \varphi),  P_y\rangle = \psi(y) f(\varphi(y))
= \langle f, \psi(y) P_{\varphi(y)}\rangle
\end{align*}
and the lemma follows.
\end{proof}

\begin{remark}
A similar idea gives an explicit formula for the adjoint $W_{\varphi,\psi}^{\star}$ of $W_{\varphi,\psi}$:
\begin{equation}\label{eqn:adj}
(W_{\varphi,\psi}^{\star} f) (y) ~=~ \int\limits_{\sphere} f^{\ast}(\zeta) \left\{\frac {1-|\varphi^{-1}(0)|^2} {|\varphi^{-1}(0)-\zeta|^2}\right\}^{\frac {n-2}{2}}
\frac {1-|y|^2} {|y -\varphi(\zeta)|^n} d\sigma(\zeta).
\end{equation}
Indeed,
\begin{align*}
(W_{\varphi,\psi}^{\star} f) (y) ~=~& \langle W_{\varphi,\psi}^{\star} f, P_y\rangle ~=~ \langle f,  W_{\varphi,\psi} P_y\rangle \\
=~& \int\limits_{\sphere} f^{\ast}(\zeta) \psi(\zeta) P(y, \varphi(\zeta)) d\sigma(\zeta),
\end{align*}
proving \eqref{eqn:adj}.
\end{remark}

%
Now we return to the proof of Theorem \ref{thm:main3}.
It follows from Lemmas \ref{lem:adjoint} and \ref{lem:thenormofposs} that
\begin{align}\label{eqn:WaPoisson}
\frac{\|W_{\varphi,\psi}^{\star} P_y\|_{h^{p^\prime}}} {\|P_y\|_{h^{p^\prime}}}
~=~& \frac {\|P_{\varphi(y)}\|_{h^{p^{\prime}}}\psi(y)}  {\|P_y\|_{h^{p^\prime}}}\\
~=~& \frac {\Phi_{p^\prime}(|\varphi(y)|^2)\, (1-|\varphi(y)|^2)^{\frac {1-n}{p}}}
{\Phi_{p^\prime}(|y|^2)\, (1-|y|^2)^{\frac {1-n}{p}}}
\, |D\varphi(y)|^{\frac {n-2}{2}} \notag \\
~=~& \frac {\Phi_{p^\prime}(|\varphi(y)|^2)} {\Phi_{p^\prime}(|y|^2)}
\, |D\varphi(y)|^{\frac {n-2}{2}-\frac {n-1}{p}}, \notag
\end{align}
where in the last equality we used the formula \eqref{eqn:moebiden2}.
Write $b:= \varphi^{-1}(0)/|\varphi^{-1}(0)|$. Then,
by \eqref{eq:limitpoisson},
\begin{equation}
\lim_{y\to b} \frac {\Phi_{p^\prime}(|\varphi(y)|^2)} {\Phi_{p^\prime}(|y|^2)} ~=~
\lim_{y\to -b} \frac {\Phi_{p^\prime}(|\varphi(y)|^2)} {\Phi_{p^\prime}(|y|^2)} ~=~ 1,
\end{equation}
since $|\varphi(y)|\to 1$ as $|y|\to 1$. 
Also, it is clear that
\begin{align}\label{eq:limitofvarphia}
\lim_{y\to b} |D\varphi(y)| ~=~ \frac {1+|\varphi(0)|}{1-|\varphi(0)|} \\
\intertext{and}
\lim_{y\to -b} |D\varphi(y)| ~=~ \frac {1-|\varphi(0)|}{1+|\varphi(0)|}.
\end{align}
Therefore,
\begin{equation}\label{eq:normreproducing}
\limsup_{|y|\to 1^{-}}\frac{\|W_{\varphi,\psi}^{\star} P_y\|_{h^{p^\prime}}}{\|P_y\|_{h^{p^\prime}}}
~\geq~  \left(\dfrac {1+|\varphi(0)|}{1-|\varphi(0)|} \right)^{\left|\frac {n-1}{p}-\frac {n-2}{2}\right|}.
\end{equation}
It follows that
\[
\|W_{\varphi,\psi}\|_{h^p\to h^p} ~=~ \|W_{\varphi,\psi}^{\star}\|_{h^{p^\prime}\to h^{p^\prime}}
~\geq~  \left(\dfrac {1+|\varphi(0)|}{1-|\varphi(0)|} \right)^{\left|\frac {n-1}{p}-\frac {n-2}{2}\right|}
\]
as desired.


\section{Proof of theorem \ref{thm:essentialnorm}}

\begin{lemma}\label{lem:weakly}
Let $k_y^{(p)} := P_y/\|P_y\|_{h^p}$. Suppose $1<p<\infty$. Then $k_y^{(p)} \to 0$ weakly in $h^p(\ball)$ as $|y|\to 1$.
\end{lemma}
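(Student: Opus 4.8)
The plan is to show that the normalized extended Poisson kernels $k_y^{(p)} = P_y/\|P_y\|_{h^p}$ tend to zero weakly in $h^p(\ball)$ as $|y| \to 1$. Since $h^p(\ball) \cong L^p(\sphere)$ via the boundary-function isometry (for $1<p<\infty$), and the dual is $L^{p'}(\sphere) \cong h^{p'}(\ball)$, weak convergence to zero means that $\langle k_y^{(p)}, g\rangle \to 0$ for every $g \in h^{p'}(\ball)$. By the uniform boundedness principle, it suffices to verify this on a dense subset of $h^{p'}(\ball)$ and to check that the family $\{k_y^{(p)}\}$ is norm-bounded. The latter is immediate, since each $k_y^{(p)}$ has $h^p$-norm exactly one.

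**First**, I would pin down the test functions. A convenient dense class in $h^{p'}(\ball)$ is the harmonic functions that extend continuously to $\overline{\ball}$ (equivalently, Poisson integrals of continuous functions on $\sphere$); indeed $C(\sphere)$ is dense in $L^{p'}(\sphere)$ for $1 \le p' < \infty$. For such a $g$ with boundary function $g^\ast \in C(\sphere)$, the pairing is
\[
\langle k_y^{(p)}, g\rangle = \frac{1}{\|P_y\|_{h^p}} \int\limits_{\sphere} P(y,\zeta)\, g^\ast(\zeta)\, d\sigma(\zeta) = \frac{g(y)}{\|P_y\|_{h^p}},
\]
using $(P_y)^\ast = P(y,\cdot)$ and the reproducing property of the Poisson kernel exactly as in the proof of Lemma~\ref{lem:adjoint}.

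**Next**, I would estimate the two factors as $|y| \to 1$. The numerator $g(y) = P[g^\ast](y)$ stays bounded, since $g^\ast$ is continuous hence bounded and the Poisson integral is a contraction on $L^\infty$; in fact $|g(y)| \le \|g^\ast\|_{L^\infty(\sphere)}$. For the denominator, Lemma~\ref{lem:thenormofposs} gives
\[
\|P_y\|_{h^p} = \Phi_p(|y|^2)\,(1-|y|^2)^{\frac{1-n}{p'}},
\]
and \eqref{eq:limitpoisson} shows $\Phi_p(|y|^2)$ converges to a finite nonzero limit as $|y|\to 1$. Since $n \ge 2$ forces $\frac{1-n}{p'} \le 0$, the factor $(1-|y|^2)^{\frac{1-n}{p'}} \to +\infty$ as $|y| \to 1$, unless $n = 1$ — which is excluded — so $\|P_y\|_{h^p} \to \infty$. (Strictly, when $n \ge 2$ and $1 < p < \infty$ we have $p' < \infty$ and $1-n < 0$, so the exponent is genuinely negative and the blow-up is guaranteed.) Combining the two estimates yields $\langle k_y^{(p)}, g\rangle \to 0$ on the dense class. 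With the uniform norm bound $\|k_y^{(p)}\|_{h^p} = 1$, an $\varepsilon/3$ argument promotes this to all of $h^{p'}(\ball)$, establishing weak convergence to zero.

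**The main subtlety**, rather than a serious obstacle, is confirming that $\|P_y\|_{h^p} \to \infty$ — i.e., that the negative exponent in Lemma~\ref{lem:thenormofposs} really does produce blow-up. This hinges on $n \ge 2$ and $p' < \infty$ (both part of the hypotheses), ensuring $\frac{1-n}{p'} < 0$, together with the finiteness and positivity of $\lim_{r \to 1^-}\Phi_p(r)$ from \eqref{eq:limitpoisson}. The density of $C(\sphere)$ and the contraction estimate for the Poisson integral are routine. I expect no genuine difficulty beyond assembling these ingredients, since the divergence of $\|P_y\|_{h^p}$ does all the work.
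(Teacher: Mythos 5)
Your proof is correct, but it takes a more self-contained route than the paper. The paper's own proof is three lines: the family $\{k_y^{(p)}\}$ is norm-bounded, it tends to $0$ uniformly on compact subsets of $\ball$ (immediate from \eqref{eq:normpoisson}, since $\|P_y\|_{h^p}\to\infty$ while $P_y$ stays locally bounded), and then weak convergence follows by citing \cite[Corollary 1.3]{CM95}, a general principle for functional Banach spaces with bounded point evaluations. You instead prove the needed instance of that principle by hand: you identify $h^p(\ball)^\ast$ with $h^{p'}(\ball)$ via boundary functions, compute the pairing exactly, $\langle k_y^{(p)}, g\rangle = g(y)/\|P_y\|_{h^p}$, using the reproducing identity $(P_y)^\ast = P(y,\cdot)$ as in Lemma~\ref{lem:adjoint}, and then run a density argument over harmonic functions with continuous boundary data. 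The quantitative core is identical in both proofs — the blow-up $\|P_y\|_{h^p}\to\infty$ from Lemma~\ref{lem:thenormofposs}, for which you correctly check both that the exponent $\frac{1-n}{p'}$ is strictly negative (using $n\ge 2$, $p'<\infty$) and that $\lim_{r\to 1^-}\Phi_p(r)$ is a \emph{positive} finite constant (the Gamma quotient in \eqref{eq:limitpoisson} has positive arguments since $p>1$), a point the denominator argument genuinely needs. Your version buys independence from the external citation and makes transparent that the norm divergence does all the work; the paper's version buys brevity. One small correction: the reduction to a dense subset of the dual is not an application of the uniform boundedness principle — with the explicit bound $\|k_y^{(p)}\|_{h^p}=1$ in hand it is just the elementary approximation estimate $|\langle k_y^{(p)}, g\rangle| \le |\langle k_y^{(p)}, g_0\rangle| + \|g-g_0\|_{h^{p'}}$, which is exactly the argument you then carry out, so nothing is missing.
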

\begin{proof}
Clearly, the family $\{k_y^{(p)}\}_{y\in \ball}$ is bounded in $h^p(\ball)$.  Also, it is immediate from \eqref{eq:normpoisson} that
$k_y^{(p)}\to 0$ uniformly on every compact subset of $\ball$ as $|y|\to 1$.
In view of \cite[Corollary 1.3]{CM95},  this implies $k_y^{(p)} \to 0$ weakly in $h^p(\ball)$.
\end{proof}

We proceed to the proof of theorem \ref{thm:essentialnorm}.
Since $\|W_{\varphi, \psi}\|_{e} \leq \|W_{\varphi, \psi}\|_{h^p\to h^p}$, it follows from Theorem \ref{thm:main3} that
\[
\|W_{\varphi, \psi}\|_{e} ~\leq~ \left(\dfrac {1+|\varphi(0)|}{1-|\varphi(0)|} \right)^{\left|\frac {n-1}{p}-\frac {n-2}{2}\right|}.
\]
To prove the converse inequality, let $\{y_j\}_{j=1}^{\infty}$ be in $\ball$ tending to $\sphere$.
If $K$ is an arbitrary compact operator on $h^p(\ball)$,
we have $K^\star (k_{y_j}^{(p^{\prime})}) \to 0$, by Lemma \ref{lem:weakly}.
Since $K$ is compact,
\begin{align*}
\|W_{\varphi, \psi}-K\|_{h^p\to h^p} ~=~& \|(W_{\varphi, \psi}-K)^\star\|_{h^{p^\prime}\to h^{p^\prime}}\\
\geq&~ \limsup_{j\to \infty}\|(W_{\varphi, \psi}-K)^\star(k_{y_j}^{(p^{\prime})}) \|_{h^{p^\prime}}\\
\geq &~ \limsup_{j\to \infty}\|W_{\varphi, \psi}^\star(k_{y_j}^{(p^{\prime})})\|_{h^{p^\prime}}.
\end{align*}
Thus,
\[
\|W_{\varphi, \psi}-K\|_{h^p\to h^p} \geq  \limsup_{|y|\to 1^{-}}\|W_{\varphi,\psi}^\star(k_{y}^{(p^{\prime})})\|_{h^{p^\prime}} .
\]
Now, an application of \eqref{eq:normreproducing} completes the proof.


\end{document}